\theoremstyle{definition}
\newtheorem*{remark*}{Remark}
\newtheorem*{claim*}{Claim}
\theoremstyle{plain}
\newtheorem{theorem}{Theorem}[section]
\newtheorem{corollary}[theorem]{Corollary}
\newtheorem{observation}[theorem]{Observation}
\newtheorem{lemma}[theorem]{Lemma} 
\newtheorem{conjecture}[theorem]{Conjecture}
\newtheorem{question}[theorem]{Question}
\newtheorem{problem}[theorem]{Problem}
\theoremstyle{remark}
\newtheorem{remark}[theorem]{Remark}
\def\str#1{\mathbf {#1}}
\def\Aut{\mathop{\mathrm{Aut}}\nolimits}
\def\Sym{\mathop{\mathrm{Sym}}\nolimits}
\def\Age{\mathop{\mathrm{age}}\nolimits}
\def\eppa{\mathop{\mathrm{eppa}}\nolimits}
\def\ceppa{\mathop{\mathrm{ceppa}}\nolimits}
\begin{document}
\bibliographystyle{alpha}
\title{EPPA numbers of graphs}

\authors{
\author[D. Bradley-Williams]{David Bradley-Williams}
\address{Institute of Mathematics\\Czech Academy of Sciences\\ Prague, Czech Republic}
\email{williams@math.cas.cz}
\author[P. J. Cameron]{Peter J. Cameron}
\address{School of Mathematics and Statistics\\University of St Andrews\\ UK}
\email{pjc20@st-andrews.ac.uk}
\author[J. Hubi\v cka]{Jan Hubi\v cka}
\address{Department of Applied Mathematics (KAM)\\ Charles University\\ Prague, Czech Republic}
\email{hubicka@kam.mff.cuni.cz}
\author[M. Kone\v cn\'y]{Mat\v ej Kone\v cn\'y}
\address{Institute of Algebra\\TU Dresden\\ Dresden, Germany}
\email{matej.konecny@tu-dresden.de}

\thanks{D. B.-W. was supported by the project EXPRO 20-31529X of the Czech Science Foundation (GAČR) and by the Czech Academy of Sciences CAS (RVO 67985840). J. H. was supported by the project 21-10775S of  the  Czech  Science Foundation (GA\v CR) in the earlier stages of this project, and by a project that has received funding from the European Research Council under the European Union's Horizon 2020 research and innovation programme (grant agreement No 810115) in the later stages. M. K. was supported by the European Research Council (Project POCOCOP, ERC Synergy Grant 101071674). Views and opinions expressed are however those of the authors only and do not necessarily reflect those of the European Union or the European Research Council Executive Agency. Neither the European Union nor the granting authority can be held responsible for them.}

}

\begin{abstract}
If $G$ is a graph, $A$ and $B$ its induced subgraphs, and $f\colon A\to B$ an isomorphism, we say that $f$ is a \emph{partial automorphism} of $G$. In 1992, Hrushovski proved that graphs have the \emph{extension property for partial automorphisms} (\emph{EPPA}, also called the \emph{Hrushovski property}), that is, for every finite graph $G$ there is a finite graph $H$, an \emph{EPPA-witness} for $G$, such that $G$ is an induced subgraph of $H$ and every partial automorphism of $G$ extends to an automorphism of $H$.

The \emph{EPPA number} of a graph $G$, denoted by $\eppa(G)$, is the smallest number of vertices of an EPPA-witness for $G$, and we put $\eppa(n) = \max\{\eppa(G) : \lvert G\rvert = n\}$. In this note we review the state of the area, prove several lower bounds (in particular, we show that $\eppa(n)\geq \frac{2^n}{\sqrt{n}}$, thereby identifying the correct base of the exponential) and pose many open questions. We also briefly discuss EPPA numbers of hypergraphs, directed graphs, and $K_k$-free graphs.
\end{abstract}

\maketitle

\section{Introduction}
Let $\str A$ and $\str B$ be finite structures (e.g. graphs, hypergraphs, or metric spaces) such that $\str A$ is a substructure of $\str B$ (all substructures and subgraphs in this note will be induced). We say that $\str B$ is an \emph{EPPA-witness} for $\str A$ if every isomorphism of substructures of $\str A$ (a \emph{partial automorphism of $\str A$}) extends to an automorphism of $\str B$. We say that a class $\mathcal C$ of finite structures has the \emph{extension property for partial automorphisms} (\emph{EPPA}, also called the \emph{Hrushovski property}) if for every $\str A\in \mathcal C$ there is $\str B\in \mathcal C$ which is an EPPA-witness for $\str A$.

In 1992, Hrushovski~\cite{hrushovski1992} established that the class of all finite graphs has EPPA. This result was used by Hodges, Hodkinson, Lascar, and Shelah to show the small index property for the random graph~\cite{hodges1993b}. After this, the quest of identifying new classes of structures with EPPA continued with a series of papers 
including~\cite{Herwig1995,herwig1998,herwig2000,hodkinson2003,solecki2005,vershik2008,Aranda2017,Hubicka2017sauer,Conant2015,HubickaSemigenericAMUC,Hubicka2018metricEPPA,Konecny2018b,eppatwographs,otto2017,Evans3,Hubicka2018EPPA,HubickaSemigeneric}.
Nevertheless, there are still many classes for which EPPA is open. The most notable examples are the class of all finite tournaments (see~\cite{herwig2000}), the class of all finite partial Steiner systems (see~\cite{Hubicka2018EPPA}), and the class of all finite graphs where automorphisms can also send edges to non-edges and vice versa (see~\cite{eppatwographs}). See also Remark~\ref{rem:homogeneous}.

In this note we will, however, mainly concern ourselves with graphs. Given a graph $G$, we define its \emph{EPPA number} as $$\eppa(G) = \min \{\lvert H\rvert : H\text{ is an EPPA-witness for }G\},$$ and we put $\eppa(n) = \max\{\eppa(G) : \lvert G\rvert = n\}$.

Hrushovski in his paper proved that
$$2^{\frac{n}{2}} \leq \eppa(n) \leq (2n2^n)!$$
and asked if the upper bound can be improved (in particular, if it can be improved to $2^{\mathcal O(n^2)}$). This was answered by Herwig and Lascar in 2000~\cite{herwig2000} who proved that $\eppa(n) \leq \left(\frac{3en}{4}\right)^n$, and that if $G$ is a graph on $n$ vertices with maximum degree $d$ then $\eppa(G)\leq \mathcal O((nd)^d)$. In 2018, another construction of EPPA-witnesses was found by Evans, Hubička, Konečný and Nešetřil, giving an upper bound $\eppa(n)\leq n2^{n-1}$~\cite{eppatwographs,Hubicka2018EPPA} (this construction has been independently discovered also by Andr\'eka and N\'emeti~\cite{Andreka2019}).

In this note we prove several new lower bounds, give a brief review of some of the best currently known upper bounds and techniques giving them, and pose some open problems. 
We prove:
\begin{theorem}\label{thm:lower_bound}
For every $n$, there is a graph $G$ on $n$ vertices with $\eppa(G)\geq {n-1 \choose \lfloor (n-1)/2\rfloor}$. Consequently,
$$\Omega(2^n/\sqrt{n}) \leq \eppa(n) \leq n2^{n-1}.$$
\end{theorem}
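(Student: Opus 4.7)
The plan is to exhibit, for each $n$, a specific graph $G$ on $n$ vertices for which any EPPA-witness $H$ contains at least $\binom{n-1}{\lfloor(n-1)/2\rfloor}$ vertices. The asymptotic consequence $\eppa(n) = \Omega(2^n/\sqrt{n})$ then follows from the Stirling estimate $\binom{n-1}{\lfloor(n-1)/2\rfloor} \sim 2^{n-1}\sqrt{2/(\pi(n-1))}$, and the upper bound $n \cdot 2^{n-1}$ is quoted from~\cite{eppatwographs,Hubicka2018EPPA} as already recorded in the introduction.

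For the lower bound, I would take a graph $G$ on vertices $\{v_0, v_1, \ldots, v_{n-1}\}$ with a distinguished vertex $v_0$. For each subset $S \subseteq \{v_1, \ldots, v_{n-1}\}$ of size $\lfloor(n-1)/2\rfloor$, the plan is to (a) produce a partial automorphism $p_S$ of $G$ whose action rearranges the relevant substructure so that $S$ plays the role originally occupied by $v_0$'s neighborhood in $G$; (b) invoke EPPA to obtain an extension $\hat p_S \in \Aut(H)$; and (c) set $w_S := \hat p_S(x)$ for a suitable probe vertex $x \in V(G)$. The graph $G$ is designed so that $N_H(w_S) \cap V(G)$ is a function of $S$ that is injective on middle-sized subsets, forcing the $w_S$ to be pairwise distinct. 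Counting the middle layer then yields $|V(H)| \geq \binom{n-1}{\lfloor(n-1)/2\rfloor}$.

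The main obstacle is the combined constraint on $G$: one needs simultaneously enough symmetry that every $p_S$ is a genuine partial automorphism (i.e., an isomorphism between the induced subgraphs on its domain and range), and enough rigidity that the extensions produce genuinely distinct vertices $w_S$ with the claimed neighborhood property. These requirements pull in opposite directions, and striking the balance is the technical heart of the argument. The appearance of the middle binomial coefficient rather than the full $2^{n-1}$ reflects a Sperner-type antichain obstruction — restricting attention to the middle layer of $2^{[n-1]}$ gives exactly $\binom{n-1}{\lfloor(n-1)/2\rfloor}$ vertices and identifies the correct base~$2$ of the exponential, matching the known upper bound up to a polynomial factor of $n\sqrt{n}$.
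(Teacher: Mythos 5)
Your overall strategy is the paper's: distinguish a vertex $v_0$, and use extended partial automorphisms to force $H$ to contain, for each middle-layer subset $S$ of the remaining $n-1$ vertices, a vertex whose neighbourhood among those $n-1$ vertices is exactly $S$. But the proposal stops at precisely the step that constitutes the proof: you never specify $G$, and you describe its construction as an unresolved balancing act between ``symmetry'' and ``rigidity.'' That is a genuine gap --- without a concrete $G$ and a verification that each $p_S$ is an isomorphism of induced subgraphs, there is no bound. Moreover, the tension you perceive is illusory. Take $\{v_1,\dots,v_{n-1}\}$ to be an \emph{independent set} and join $v_0$ to exactly $\lfloor(n-1)/2\rfloor$ of its vertices, so $G$ is a star together with isolated vertices. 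Independence makes every permutation of $\{v_1,\dots,v_{n-1}\}$ vacuously a partial automorphism of $G$ (with $v_0$ \emph{not} in its domain), so no symmetry needs to be engineered. For each $S$ of size $k=\lfloor(n-1)/2\rfloor$, pick a permutation $f_S$ of the independent set carrying $N(v_0)$ onto $S$, extend it to $g_S\in\Aut(H)$, and observe that $g_S(v_0)$ is adjacent in $H$ to every vertex of $S$ and to no other vertex of $\{v_1,\dots,v_{n-1}\}$; distinct $S$ therefore yield distinct vertices $g_S(v_0)$. No rigidity of $G$ is needed --- the distinguishing happens inside $H$ because the sets $S$ are themselves distinct. (This is exactly the paper's Lemma~\ref{lem:hrus}/Corollary~\ref{cor:more_general} applied to the independent set $A=\{v_1,\dots,v_{n-1}\}$.)

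A smaller point: the Sperner/antichain framing is not the reason the middle binomial coefficient appears. A partial automorphism of $G$ preserves the number of neighbours of the probe vertex inside the independent set, so the construction can only realize subsets of the single size $k=\deg(v_0)$; one gets ${n-1 \choose k}$ for whichever $k$ is built into $G$, and $k=\lfloor(n-1)/2\rfloor$ is chosen simply to maximize that coefficient. Your asymptotic estimate and the quoted upper bound are fine as stated.
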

\begin{theorem}\label{thm:bwc}
$\eppa(G)\geq \frac{5}{4}\lvert G\rvert$, unless $G$ is a subgraph of a homogeneous graph.
\end{theorem}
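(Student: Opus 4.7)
The plan is to argue the contrapositive: if some EPPA-witness $H$ of $G$ satisfies $\lvert V(H)\rvert<\frac{5}{4}\lvert V(G)\rvert$, I aim to show that $G$ embeds as an induced subgraph of some finite homogeneous graph. Write $n=\lvert V(G)\rvert$ and $k=\lvert V(H)\rvert-n<n/4$, and assume $H$ is of minimum order among EPPA-witnesses for $G$.

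The first reduction is to vertex-transitivity of $H$. Since every one-point partial isomorphism $u\mapsto v$ with $u,v\in V(G)$ is a partial auto of $G$ and hence extends to some $\sigma\in\Aut(H)$, all of $V(G)$ lies in a single $\Aut(H)$-orbit $O$. The induced subgraph $H[O]$ is still an EPPA-witness for $G$: any $\sigma\in\Aut(H)$ extending a partial auto of $G$ preserves the orbit $O$ setwise, so $\sigma|_O$ is an automorphism of $H[O]$ extending the same map. By minimality of $\lvert V(H)\rvert$ we must have $V(H)=O$, so $H$ is vertex-transitive and $S:=V(H)\setminus V(G)$ has size $k<n/4$.

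Next I would study the point stabilizer $\Aut(H)_v$ for fixed $v\in V(G)$. For any $u,u'\in V(G)\setminus\{v\}$ having the same $G$-adjacency to $v$, the map $v\mapsto v,\ u\mapsto u'$ is a partial auto of $G$ and hence extends to some $\sigma\in\Aut(H)_v$ sending $u$ to $u'$. Consequently $\Aut(H)_v$ has at most two orbits meeting $V(G)\setminus\{v\}$, with $V(G)$-traces equal to $N_G(v)$ and its complement; every other orbit is contained in $S$. This ``near 2-point homogeneity'' over each $v\in V(G)$, combined with the bound $\lvert S\rvert<n/4$, severely restricts how $V(G)$ can sit inside $H$.

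The main obstacle, and the heart of the proof, is converting this orbit-theoretic rigidity into an explicit embedding of $G$ into a finite homogeneous graph. I would appeal to Gardiner's classification, according to which the finite homogeneous graphs are exactly $mK_r$, $K_{r,\ldots,r}$, $C_5$, and $L(K_{3,3})$. The two-orbit structure on $V(G)\setminus\{v\}$ should induce a canonical equivalence (a twin-type relation whose classes exhibit uniform neighborhood behaviour inside $G$), and the small deficit $k<n/4$ should be too tight to allow $G$ to have a shape incompatible with one of the four families above. I expect the constant $\frac{5}{4}$ to arise from an extremal case in this analysis, corresponding to the smallest graph $G$ that sits just outside the homogeneous families and whose minimum EPPA-witness achieves a ratio close to $5n/4$; the delicate case analysis required to verify that no exotic $G$ sneaks through is likely the hardest step of the argument.
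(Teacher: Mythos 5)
Your setup matches the paper's: you pass to the contrapositive, establish vertex-transitivity of a minimum-order witness $H$ via the one-point partial automorphisms, and bound $\lvert S\rvert=\lvert H\rvert-\lvert G\rvert$. But from there the proposal has a genuine gap: the ``heart of the proof'' that you defer to a ``delicate case analysis'' is exactly the part that is missing, and the route you sketch for it would not work as stated. Your stabilizer argument only yields that for a fixed $v\in G$, pairs $(v,u)$ and $(v,u')$ with $u,u'\in G$ of the same adjacency type lie in one orbit; this is at best a form of $2$-homogeneity restricted to tuples inside $V(G)$, and it says nothing about tuples meeting $S$. Moreover, even full $2$-homogeneity of $H$ would be far from sufficient to place $H$ in Gardiner's list: the Petersen graph, and rank-$3$ strongly regular graphs generally, are $2$-homogeneous but not homogeneous. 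So the orbit information you extract does not ``severely restrict'' $G$ in any way that feeds into the classification.

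The paper closes this gap with two specific ingredients you do not have. First, Neumann's Separation Lemma: since $\Aut(H)$ is transitive and $\lvert S\rvert<m/5$ (which is where the constant $\frac54$ actually comes from, via $m<\frac54 n\Rightarrow m-n<\frac15 m$), every subset $C\subseteq H$ with $\lvert C\rvert\le 5$ can be mapped into $V(G)$ by some automorphism, because $\lvert C\rvert\cdot\lvert S\rvert<m$. This lets one conjugate an arbitrary partial automorphism of $H$ on at most $5$ vertices into a partial automorphism of $G$, extend it there, and conjugate back, proving that $H$ is $5$-homogeneous. Second, Cameron's theorem that a $5$-homogeneous graph is homogeneous, which finishes the proof without any case analysis of how $G$ sits inside $H$. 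Without the separation lemma you cannot handle tuples touching $S$, and without the $5$-homogeneous-implies-homogeneous theorem (or an equivalent classification of highly set-transitive graphs) you cannot get from your local orbit conditions to membership in Gardiner's list. As written, the proposal is an outline whose decisive step is both unexecuted and pointed in a direction that does not close.
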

\begin{theorem}\label{thm:degrees}
Let $G$ be a graph with $n$ vertices and maximum degree $d$ which is not a subgraph of a homogeneous graph. Then $\eppa(G)\in \Omega((n/d)^2)$. If $G$ is triangle-free then $\eppa(G) \geq {\lceil n/(d+1)\rceil \choose d}$.
\end{theorem}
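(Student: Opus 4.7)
The plan has two parts: for the triangle-free statement I will give an explicit partial-automorphism construction inside a large independent set, and for the general bound I will sketch how to combine the structural obstruction of Theorem~\ref{thm:bwc} with a low-degree counting argument.

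\textbf{Triangle-free case.} Fix $v\in G$ with $\deg_G(v)=d$. Since $G$ is triangle-free, $N(v)$ is independent of size $d$ and $N(y)\cap N(v)=\emptyset$ for every $y\in N(v)$ (any common neighbour would close a triangle). Therefore $V_0 := V(G)\setminus\bigl(N[v]\cup\bigcup_{y\in N(v)}N(y)\bigr)$ has at least $n-1-d^2$ vertices, and $G[V_0]$ still has maximum degree at most $d$. A greedy independent set $J\subseteq V_0$ of size $\lceil(n-1-d^2)/(d+1)\rceil$ combines with $N(v)$ to form an independent set $I := N(v)\cup J$ with $|I|\geq (n+d-1)/(d+1) \geq \lceil n/(d+1)\rceil$; the small cases $n\leq d^2+d$, where $\lceil n/(d+1)\rceil\leq d$, are covered already by $|I|\geq |N(v)|=d$.

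Now the main step: for each $d$-subset $T\subseteq I$, fix a bijection $\pi_T\colon I\to I$ with $\pi_T(N(v))=T$. Since $G[I]$ is edgeless, $\pi_T$ is a partial automorphism of $G$; extend it to $\hat{\pi}_T\in\Aut(H)$ and set $u_T := \hat{\pi}_T(v)$. Because $\hat{\pi}_T$ permutes $I$ (agreeing with $\pi_T$ there) and $G$ is induced in $H$ so that $N_H(v)\cap I = N_G(v)\cap I = N(v)$, we get
$$N_H(u_T)\cap I \;=\; \hat{\pi}_T\bigl(N_H(v)\cap I\bigr) \;=\; \pi_T(N(v)) \;=\; T.$$
The assignment $T\mapsto u_T$ is therefore injective, yielding $|V(H)|\geq{|I|\choose d}\geq{\lceil n/(d+1)\rceil\choose d}$.

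\textbf{General case.} For $\eppa(G)\in\Omega((n/d)^2)$, the natural approach is to use Theorem~\ref{thm:bwc} (or ideas from its proof) to locate a bounded-size substructure of $G$ whose witnessing partial automorphism provably forces an extra vertex in any EPPA-witness. The maximum-degree hypothesis then provides, by Caro--Wei or a greedy deletion at the level of small substructures, a family of $\Omega(n/d)$ such obstructions that mutually avoid each other's neighbourhoods, and gluing partial automorphisms across pairs of these obstructions should yield $\Omega((n/d)^2)$ distinct images in $H$. The main difficulty I anticipate is the distinctness step: in the triangle-free argument distinctness is free from the fingerprint $N_H(u_T)\cap I = T$, while here one must select the obstruction carefully so that both coordinates of the indexing pair leave traceable marks on the image, rather than collapsing into a single one.
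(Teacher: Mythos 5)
Your triangle-free argument is correct and is essentially the paper's own proof: you extend the (independent) neighbourhood of a maximum-degree vertex $v$ to a large independent set $I$ with $\lvert I\rvert\geq\lceil n/(d+1)\rceil$, and the fingerprint $N_H(\hat\pi_T(v))\cap I=T$ is exactly the mechanism of Lemma~\ref{lem:hrus} and Corollary~\ref{cor:more_general}, packaged in the paper as Corollary~\ref{cor:general} with $k=d$. Your detour through $V_0$ (deleting the second neighbourhood before greedily choosing $J$) is a slightly heavier way of saying ``extend $N(v)$ greedily to a maximal independent set of $G$'', but it is sound.

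The general case, however, is a genuine gap, and the route you sketch points in the wrong direction. Theorem~\ref{thm:bwc} and its proof (Neumann's separation lemma plus Cameron's classification of $5$-homogeneous graphs) only yield the linear bound $\frac{5}{4}\lvert G\rvert$ and contribute nothing to a quadratic bound; and your plan of ``gluing partial automorphisms across pairs of obstructions'' never specifies what the obstructions are or why the $\Omega((n/d)^2)$ images would be distinct --- which is precisely the step you yourself flag as the difficulty. The point you are missing is that no new idea is needed: your triangle-free argument already gives the general bound once $d$ is replaced by $2$. Since $G$ is not a subgraph of a finite homogeneous graph, it is in particular not a disjoint union of cliques, so some vertex $v$ has two non-adjacent neighbours. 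Take a maximum independent set $X\subseteq N(v)$, so $k:=\lvert X\rvert\geq 2$, and extend it to a maximal independent set $A$ of $G$; then $\lvert A\rvert\geq n/(d+1)$, and by maximality of $X$ inside $N(v)$ no further neighbour of $v$ can lie in $A$, so $A\cap N(v)=X$. The same fingerprint argument applied to the $k$-subsets of $A$ gives $\eppa(G)\geq\binom{\lvert A\rvert}{k}\geq\binom{\lvert A\rvert}{2}\in\Omega((n/d)^2)$, the middle inequality holding because $k\leq d$ is small compared to $\lvert A\rvert$ in the only regime ($n\gg d^2$) where the claimed bound is non-trivial. This is exactly the paper's Corollary~\ref{cor:bounded_quadratic}.
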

This theorem in particular implies that $\eppa(C_n) \in \Theta(n^2)$ where $C_n$ is the cycle on $n$ vertices. See Corollary~\ref{cor:cycles} for a more precise statement.

Let $G(n,p)$ denote the random graph on $n$ vertices where each edge is present independently with probability $p$. In particular, $G(n,1/2)$ is the uniform distribution on all graphs on $n$ vertices.
\begin{theorem}\label{thm:random}
For every $\delta > 0$, asymptotically almost surely we have that $$\eppa(G(n,1/2)) \in \Omega(n^{2-\delta}).$$
\end{theorem}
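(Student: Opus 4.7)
My plan is to combine Theorem~\ref{thm:lower_bound} with the classical induced universality of the binomial random graph. I first record the easy fact that $\eppa$ is monotone under taking induced subgraphs: if $G'$ is an induced subgraph of $G$, then $\eppa(G') \leq \eppa(G)$. Indeed, any EPPA-witness $H$ for $G$ contains $G'$ as an induced subgraph, and every partial automorphism of $G'$ is in particular a partial automorphism of $G$, hence extends to an automorphism of $H$.

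Fix $\delta > 0$, choose $\epsilon \in (0, \delta)$, and set $k := \lfloor(2-\epsilon)\log_2 n\rfloor$. By Theorem~\ref{thm:lower_bound} applied at size $k$, there exists a graph $G^*$ on $k$ vertices with
$$\eppa(G^*) \geq \binom{k-1}{\lfloor(k-1)/2\rfloor} = \Theta(2^k/\sqrt{k}) = \Omega(n^{2-\epsilon}/\sqrt{\log n}) \geq n^{2-\delta}$$
for all sufficiently large $n$. By monotonicity, it now suffices to show that a.a.s.\ $G(n,1/2)$ contains this $G^*$ as an induced subgraph; in fact I would prove the stronger statement that a.a.s.\ $G(n,1/2)$ contains \emph{every} graph on $k$ vertices as an induced subgraph.

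For a fixed graph $F$ on $k$ vertices, let $X_F$ denote the number of induced copies of $F$ in $G(n,1/2)$. One computes $\mathbb{E}[X_F] = \frac{(n)_k}{|\Aut(F)|} \cdot 2^{-\binom{k}{2}} \geq 2^{\Omega((\log n)^2)}$, uniformly in $F$, for this choice of $k$. A standard second-moment (or Janson-type) argument then yields $\Pr[X_F = 0] \leq \exp(-2^{\Omega((\log n)^2)})$, which easily survives the union bound over the at most $2^{\binom{k}{2}} = n^{O(\log n)}$ graphs $F$ on $k$ vertices. This is the well-known ``induced universality'' of $G(n,1/2)$ at size $(2-o(1))\log_2 n$.

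The only real work is the concentration step: one must verify that pairs of induced copies of $F$ sharing few vertices dominate the variance calculation, uniformly in $F$ and up to $k$ essentially $2\log_2 n$. This is the standard obstacle in induced-subgraph containment statements and is well documented in the random graph literature; granting it, chaining the three ingredients above proves the theorem.
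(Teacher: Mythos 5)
Your overall route is different from the paper's and is essentially sound, but one step is justified incorrectly. The monotonicity observation ($G'$ induced in $G$ implies $\eppa(G')\leq\eppa(G)$) is correct and is a nice reduction: it lets you import the extremal graph $G^*$ of Theorem~\ref{thm:lower_bound} at scale $k=\lfloor(2-\epsilon)\log_2 n\rfloor$. The paper instead works directly inside $G(n,1/2)$: it finds an independent set $I$ of size $(2-\varepsilon)\log_2 n$ (Matula) together with a vertex adjacent to roughly half of $I$, and then applies Corollary~\ref{cor:more_general}. Both arguments ultimately invoke the same central binomial coefficient; the difference is only in how the relevant configuration is located inside the random graph.

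The gap is in your concentration step. Janson's inequality applies to increasing events (each ``copy'' being the event that a prescribed set of edges is \emph{present}); an induced copy of $F$ additionally requires prescribed non-edges, so it is not an up-set and Janson does not give you $\Pr[X_F=0]\leq\exp(-2^{\Omega((\log n)^2)})$. What the second-moment method actually yields for a fixed $F$ on $k\leq(2-\epsilon)\log_2 n$ vertices is $\Pr[X_F=0]\leq\operatorname{Var}(X_F)/\mathbb{E}[X_F]^2 = O(n^{-c(\epsilon)})$ for some constant $c(\epsilon)>0$ (the overlap-$j$ terms are of order $(k^{O(1)}2^{(j-1)/2}/n)^j\leq n^{-j\epsilon/4}$). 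This polynomial bound does not survive your union bound over the $2^{\binom{k}{2}}=n^{\Theta(\log n)}$ graphs on $k$ vertices, so the ``induced universality'' claim is not established by the argument you sketch. Fortunately you do not need it: you only need the \emph{single} graph $G^*$ to appear as an induced subgraph, and for a single fixed $F$ the plain Chebyshev bound $\Pr[X_F=0]=o(1)$ is exactly what ``asymptotically almost surely'' requires. If you drop the universal statement and run the second moment for $G^*$ alone (being explicit that the overlap terms are controlled because $2^{k/2}\leq n^{1-\epsilon/2}$), your proof is complete and gives the same $\Omega(n^{2-\delta})$ bound as the paper's.
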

On the other hand, $G(n,c/n)$ is a distribution on graphs on $n$ vertices where the average degree is equal to $c$.
\begin{theorem}\label{thm:random_bddegree}
For every $c,d > 0$, asymptotically almost surely we have that $$\eppa(G(n,c/n)) \in \Omega(n^{d}).$$
\end{theorem}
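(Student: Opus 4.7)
The plan is to combine the triangle-free clause of Theorem~\ref{thm:degrees} with a routine local computation in sparse random graphs, reducing the problem to finding many isolated star components in $G(n,c/n)$. The first reduction is monotonicity of $\eppa(\cdot)$ under taking induced subgraphs: if $H$ is an induced subgraph of $G$, then any EPPA-witness for $G$ is also an EPPA-witness for $H$, since partial automorphisms of $H$ are already partial automorphisms of $G$ and thus extend. So $\eppa(H) \leq \eppa(G)$, and it suffices, for each fixed positive integer $d$, to exhibit a.a.s.\ an induced subgraph of $G(n, c/n)$ with EPPA number $\Omega(n^d)$; non-integer $d$ follow by rounding up.

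The candidate subgraph is the disjoint union $F_s$ of $s$ copies of $K_{1,d}$. This is triangle-free, has $s(d+1)$ vertices, and has maximum degree exactly $d$, so the triangle-free clause of Theorem~\ref{thm:degrees} (which, as is clear from the application to $C_n$, applies to arbitrary triangle-free graphs) gives
\[
\eppa(F_s) \ \geq\ \binom{\lceil s(d+1)/(d+1)\rceil}{d} \ =\ \binom{s}{d},
\]
which is $\Omega(n^d)$ as soon as $s = \Omega(n)$.

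It therefore suffices to realise $\Omega(n)$ vertex-disjoint induced copies of $K_{1,d}$ in $G(n, c/n)$. We do so by counting ``roots'': vertices $v$ whose whole connected component in $G(n, c/n)$ is exactly an induced $K_{1,d}$ centred at $v$. Distinct roots automatically give disjoint components. With $p = c/n$, the probability that a fixed $v$ is a root equals
\[
\binom{n-1}{d} p^d (1-p)^{\binom{d}{2}} (1-p)^{(d+1)(n-d-1)} \ \longrightarrow\ \frac{c^d}{d!}\, e^{-c(d+1)},
\]
a positive constant. Hence the expected number of roots is $\Theta(n)$.

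The main obstacle is concentration around this expectation. The event ``$v$ is a root'' is a standard small-subgraph-isomorphism-type event in $G(n, c/n)$; the classical fact that, for each fixed tree $T$, the number of components of $G(n,c/n)$ isomorphic to $T$ is asymptotically Poisson with linear mean (or, alternatively, a direct second-moment computation) yields that the number of roots is $\Omega(n)$ a.a.s. Combined with the monotonicity observation and the $\eppa(F_s)$ bound, this gives $\eppa(G(n, c/n)) \in \Omega(n^d)$ a.a.s., as required.
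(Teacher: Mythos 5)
Your proof is correct, but it localises the randomness differently from the paper. The paper's argument fixes a single linear-size independent set $A$ (all leaves of an induced forest on a constant fraction of the vertices), shows by a direct constant-probability computation that a.a.s.\ some vertex outside $A$ has exactly $d$ neighbours in $A$, and applies Corollary~\ref{cor:more_general} immediately. You instead harvest $\Omega(n)$ connected components isomorphic to $K_{1,d}$, observe that their union is an induced copy of $F_s$, and feed $F_s$ into the triangle-free clause of Theorem~\ref{thm:degrees} via the monotonicity $\eppa(H)\leq\eppa(G)$ for induced subgraphs $H\subseteq G$ (a correct and worth-stating observation: every partial automorphism of $H$ is one of $G$). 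Both routes bottom out in the same combinatorial engine --- Corollary~\ref{cor:more_general}, since Corollary~\ref{cor:triangle-free} is itself derived from it --- so the difference is purely in the probabilistic step. Your version buys a cleaner, purely local target configuration and a reusable monotonicity principle, at the cost of needing genuine concentration for the count of isolated $K_{1,d}$-components (the second-moment/Poisson fact you invoke is standard but is the one step you should make precise, since the root events for distinct vertices are not independent); the paper's version avoids concentration for a count and only needs one good vertex relative to a fixed $A$. Two small points: your appeal to the triangle-free clause is legitimate without any homogeneity exclusion because Corollary~\ref{cor:triangle-free} carries no such hypothesis (and for $d=1$, where $F_s$ is a perfect matching and hence homogeneous, the claimed bound $\Omega(n)$ is trivial anyway); and the rounding-up reduction for non-integer $d$ is fine.
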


\medskip

In the statements above we were using asymptotic notation. For completeness, we briefly review it: Given a function $f\colon \mathbb N\to \mathbb N$, we define the following classes of functions:
\begin{align*}
	\mathcal O(f(n)) &= \{ g\colon \mathbb N \to \mathbb N : (\exists c>0)(\forall n)g(n) \leq cf(n)\},\\
	\Omega(f(n)) &= \{ g\colon \mathbb N \to \mathbb N : (\exists c>0)(\forall n)g(n) \geq cf(n)\},\\
	\Theta(f(n)) &= \mathcal O(f(n)) \cap \Omega(f(n)),\\
	o(f(n)) &= \{ g\colon \mathbb N \to \mathbb N : \lim_{n\to \infty} \frac{g(n)}{f(n)} = 0\},\\
	\omega(f(n)) &= \{ g\colon \mathbb N \to \mathbb N : \lim_{n\to \infty} \frac{f(n)}{g(n)} = 0\}
\end{align*}
In plain language, $\mathcal O(f(n))$ is the class of all functions which grow at most as fast as $f(n)$ (up to a multiplicative constant), $\Omega(f(n))$ is the class of all functions which grow at least as fast as $f(n)$, $\Theta(f(n))$ is the class of all functions which grow exactly as fast as $f(n)$, $o(f(n))$ is the class of all functions which grow slower than $f(n)$, and $\omega(f(n))$ is the class of all functions which grow faster than $f(n)$.

We sometimes abuse notation and write things like $\Omega(f(n)) \leq g(n)$, or $g(n) \leq \mathcal O(f(n))$, and so on. These simply mean $g(n)\in \Omega(f(n))$ and $g(n) \in \mathcal O(f(n))$ respectively.

\section{Upper bounds}\label{sec:upper}
There are three types of EPPA-witnesses which one should keep in mind:

\subsection{Finite homogeneous graphs}
A finite graph is \emph{homogeneous} if it is an EPPA-witness for itself. In 1976, Gardiner~\cite{Gardiner1976} classified all finite homogeneous graphs. Up to complementation, they are the following:
\begin{enumerate}
\item $C_5$, the cycle on five vertices,
\item $L(K_{3,3})$, the line graph of the complete bipartite graph with both partitions of size $3$, (this graph has nine vertices), and
\item disjoint unions of complete graphs of the same size.
\end{enumerate}
Clearly, the following holds:
\begin{observation}
A homogeneous graph is an EPPA-witness for all its subgraphs.
\end{observation}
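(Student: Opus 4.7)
The plan is to simply unfold the definitions; there really is nothing to do beyond observing transitivity of the induced-subgraph relation. Let $H$ be a finite homogeneous graph and let $G$ be an induced subgraph of $H$. To verify that $H$ is an EPPA-witness for $G$, I would take an arbitrary partial automorphism $f\colon A\to B$ of $G$, i.e.\ an isomorphism between two induced subgraphs $A,B$ of $G$.

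The key (trivial) step is that since $G$ is an induced subgraph of $H$, the subgraphs $A$ and $B$ of $G$ are also induced subgraphs of $H$. Hence $f$ is in fact a partial automorphism of $H$ itself. By the assumption that $H$ is homogeneous, i.e.\ an EPPA-witness for itself, $f$ extends to an automorphism of $H$. Since $f$ was arbitrary, this shows every partial automorphism of $G$ extends to an automorphism of $H$, which is exactly the statement that $H$ is an EPPA-witness for $G$.

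There is no real obstacle here: the entire argument is one line once the definitions are in place. The only thing worth flagging explicitly is the (equally trivial) fact that the induced-subgraph relation is transitive, which is what allows a partial automorphism of $G$ to be reinterpreted as a partial automorphism of $H$.
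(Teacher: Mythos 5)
Your argument is correct and is exactly the (one-line) unfolding of definitions that the paper implicitly relies on when it states the observation with the word ``Clearly''. Nothing is missing: transitivity of the induced-subgraph relation plus homogeneity of $H$ is the whole proof.
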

While there are very few homogeneous graphs, one needs to keep them in mind, as their subgraphs tend to be exceptions in otherwise general lower bounds.

\begin{remark}\label{rem:homogeneous}
In general, a possibly infinite (model-theoretic) structure $\str M$ is \emph{homogeneous} if every isomorphism between finite substructures of $\str M$ extends to an automorphism of $\str M$. The Cherlin--Lachlan \emph{classification programme of homogeneous structures} (see e.g.~\cite{Lachlan1980,Lachlan1984,lachlan1984countable,Cherlin1998,Cherlin1999,Cherlin2013}) shows that homogeneous structures are very special; for example, Lachlan and Woodrow in 1980 proved that the only countably infinite homogeneous graphs are, up to isomorphism and complementation, the countable random graph (also called the Rado graph), its $K_k$-free analogues (called the Henson graphs), and disjoint unions of complete graphs of the same size.

Given a countable structure $\str M$, we can assume that its vertex set is the set $\mathbb N$ of natural numbers, and hence identify the automorphism group $\Aut(\str M)$ with a subgroup of the symmetric group $\Sym(\mathbb N)$. There is a natural topology of $\Sym(\mathbb N)$ (the \emph{pointwise convergence topology}) and it turns out that $\Gamma$ is a closed subgroup of $\Sym(\mathbb N)$ if and only if it is the automorphism group of some countable homogeneous structure.

It is thus, perhaps, not that surprising that various dynamical properties of closed subgroups of $\Sym(\mathbb N)$ are equivalent to combinatorial properties of classes of finite structures. EPPA is an example of this: If a hereditary class $\mathcal C$ of finite structures has EPPA and the \emph{joint embedding property} (for every $\str A,\str B\in \mathcal C$ there is $\str C\in \mathcal C$ containing both $\str A$ and $\str B$ as substructures) then $\mathcal C$ is the \emph{age} of some homogeneous structure $\str M$ (the class of all finite substructures of $\str M$, denoted by $\Age(\str M)$). Kechris and Rosendal in 2007 proved that if $\str M$ is locally finite then $\Aut(\str M)$ can be written as the closure of a union of a chain of compact subgroups if and only if $\Age(\str M)$ has EPPA.~\cite{Kechris2007}

Another example of such a property is the \emph{Ramsey property}. Kechris, Pestov, and Todor\v cevi\'c in 2005 proved that $\Aut(\str M)$ is extremely amenable if and only if $\Age(\str M)$ has the Ramsey property, that is, satisfies a structural variant of Ramsey's theorem.~\cite{Kechris2005} This can also be seen as a link between the study of EPPA numbers and the well-developed area of Ramsey numbers (see e.g.~\cite{Conlon2015}). See also~\cite{Geschke2011} for a cross-over result between EPPA and Ramsey.
\end{remark}

\subsection{Kneser graphs}
In 2000, Herwig and Lascar proved the following theorem, thereby answering Hrushovski's question:
\begin{theorem}[Herwig--Lascar, 2000~\cite{herwig2000}]\label{thm:hl}
If $G$ is a graph with $n$ vertices, $m$ edges and maximum degree $d\geq 2$ then $\eppa(G) \leq {dn - m \choose d}$. Consequently, $\eppa(n) \leq \left(\frac{3en}{4}\right)^n$, and graphs with maximum degree bounded by some constant $d$ have EPPA numbers at most $\mathcal O(n^d)$.
\end{theorem}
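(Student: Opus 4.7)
The approach suggested by the section heading is to realise an EPPA-witness as a \emph{Kneser graph}. Let $G$ have vertex set $V$, edge set $E$, $|V|=n$, $|E|=m$, and maximum degree $d\geq 2$. I would set up a ground set $S$ of size $dn-m$ consisting of one ``edge element'' $e_{uv}$ for each edge $uv\in E$, together with, for each vertex $v$, a collection of $d-\deg_G(v)$ private ``slack elements''. To each $v\in V$, I associate the $d$-subset $X_v\subseteq S$ consisting of the edge elements of edges incident to $v$ together with $v$'s slack elements. The candidate EPPA-witness $H$ is then the Kneser graph on $S$, whose vertices are the $d$-subsets of $S$ and whose edges are the disjoint pairs. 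Since an element of $S$ lies in exactly two $X_v$'s if it is an edge element (for its endpoints) and in exactly one $X_v$ if it is a slack element, one has $X_u\cap X_v=\{e_{uv}\}$ when $uv\in E$ and $X_u\cap X_v=\emptyset$ otherwise; hence $v\mapsto X_v$ realises $G$ as an induced subgraph of $H$, and $|V(H)|=\binom{dn-m}{d}$.

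The heart of the argument is to show that every partial automorphism $f\colon A\to B$ of $G$ extends to an automorphism of $H$. The plan is to produce a permutation $\pi$ of $S$ with $\pi(X_u)=X_{f(u)}$ for every $u\in A$; the induced map $X\mapsto \pi(X)$ on $\binom{S}{d}$ is then an automorphism of $H$ extending $f$. I partition $S$ into three classes relative to $A$: (i) edge elements $e_{uv}$ with $u,v\in A$, which are forced to map to $e_{f(u)f(v)}$ as they are the unique common points of $X_u\cap X_v$ and $X_{f(u)}\cap X_{f(v)}$; (ii) the remaining ``external'' elements of each $X_u$ (slack elements at $u$ together with edge elements $e_{uw}$ for $w\notin A$), which I map by an arbitrary bijection onto the analogous remaining elements of $X_{f(u)}$; and (iii) elements of $S$ lying in no $X_u$ with $u\in A$, which I map by any bijection onto elements lying in no $X_w$ with $w\in B$.

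The key verification — which I expect to be the main technical step — is that the counts required by (ii) and (iii) match. In (ii), the number of external elements in $X_u$ equals $d-\deg_A(u)$ and similarly $d-\deg_B(f(u))$ at the image, and these agree because $f$ is an isomorphism between the induced subgraphs $A$ and $B$. A short inclusion--exclusion (each element of $S$ belongs to at most two $X_v$'s) gives $\bigl|\bigcup_{u\in A}X_u\bigr|=d|A|-e(A)=d|B|-e(B)=\bigl|\bigcup_{w\in B}X_w\bigr|$, so (iii) is also balanced. Consistency across the three classes is automatic because they partition $S$, and the three partial bijections are defined coherently on overlaps, since case (i) is the only place two different $X_u$'s meet.

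Finally, the asymptotic consequences follow mechanically from $\eppa(G)\leq\binom{dn-m}{d}$. For fixed $d$ one bounds $\binom{dn-m}{d}\leq\binom{dn}{d}\leq(en)^d=\mathcal O(n^d)$. The universal bound $\eppa(n)\leq(3en/4)^n$ is obtained by optimising the estimate $\binom{dn-m}{d}\leq\bigl(\tfrac{e(dn-m)}{d}\bigr)^d$ over the admissible range of $(d,m)$ using the handshake inequality $2m\leq dn$ and $d\leq n-1$; this last step is a routine calculation once the construction is in hand.
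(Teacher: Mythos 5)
Your construction is essentially the one from the paper (Herwig--Lascar's Kneser-type witness; your ``slack elements'' are exactly the paper's half-edges used to regularise), and your three-class extension of a partial automorphism to a permutation of the ground set $S$ is a correct fleshing-out of the step the paper only asserts. One sign slip: with the Kneser graph as you define it (disjoint pairs adjacent), $X_u$ and $X_v$ are adjacent iff $uv\notin E$, so $v\mapsto X_v$ embeds $\bar G$, not $G$. You want intersecting pairs to be adjacent (the \emph{complement} of the Kneser graph, which is what the paper uses), or equivalently complement the witness at the end; the permutation-action argument and the count $\binom{dn-m}{d}$ are unaffected.

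The genuine gap is in the last paragraph. The bound $\binom{dn-m}{d}\leq\left(\frac{3en}{4}\right)^n$ does \emph{not} follow by optimising over the admissible range $2m\leq dn$, $d\leq n-1$: for the star $K_{1,n-1}$ one has $d=n-1$, $m=n-1$, and $\binom{dn-m}{d}=\binom{(n-1)^2}{n-1}$, which is of order $(e(n-1))^{n-1}/\sqrt{n}$ and exceeds $\left(\frac{3en}{4}\right)^n$ for large $n$ (their log-ratio grows like $n\ln\frac{4}{3}$). The missing ingredient, which the paper explicitly flags, is that the complement of an EPPA-witness for $G$ is an EPPA-witness for $\bar G$, so $\eppa(G)=\eppa(\bar G)$ and one may assume $m\geq\frac{1}{2}\binom{n}{2}$. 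Then $m/d\geq n/4$, hence $\binom{dn-m}{d}\leq\left(e\left(n-\frac{m}{d}\right)\right)^d\leq\left(\frac{3en}{4}\right)^d\leq\left(\frac{3en}{4}\right)^n$. Without passing to the complement the claimed deduction is false, so you should add this observation; the $\mathcal O(n^d)$ consequence for bounded degree is fine as you state it.
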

We give a sketch of the proof, for full details see Section~4.1 of~\cite{herwig2000}, which is fully self-contained and does not need any familiarity with the rest of the paper.
\begin{proof}[Sketch of a proof]
First assume that $G$ is $d$-regular and let $E$ be the set of edges of $G$. Construct a graph $H$ with vertex set $E \choose d$, the set of all $d$-element subsets of $E$, where $X$ is connected to $Y$ if and only if $X\cap Y\neq \emptyset$, that is, $H$ is isomorphic to the complement of the \emph{Kneser graph $K(\lvert E\rvert, d)$~\cite{Kneser1955}}. Define an embedding $\psi\colon G\to H$ such that $\psi(v) = \{e\in E : v\in e\}$. Note that every partial automorphism $\varphi$ of $G$ induces a partial permutation of $E$ which can be extended to a full permutation of $E$ such that its action on $E \choose d$ extends $\varphi$. Observing that every permutation of $E$ induces an automorphism of $H$ concludes the proof that $H$ is an EPPA-witness for $G$.

If $G$ is not $d$-regular, one can add ``half-edges'' to make it regular, or more precisely, to make all images in $\psi[G]$ have the same cardinality. Doing this in the optimal way leads to $\lvert H\rvert = {dn - m \choose d}$, and after observing that the complement of an EPPA-witness for a graph $G$ is an EPPA-witness for the complement of $G$, one can bound ${dn - m \choose d} \leq \left(\frac{3en}{4}\right)^n$ (see Section~4.1 of~\cite{herwig2000} for the computation).
\end{proof}
Note that the constructed EPPA-witness only depends on the parameters $n$, $m$ and $d$.

\subsection{Valuation graphs}
Herwig and Lascar's bound for $\eppa(n)$ is super-expo\-nential, while Hrushovski's lower bound is exponential. This gap has been closed by Evans, Hubička, Konečný and Nešetřil in 2018~\cite{eppatwographs} who proved that $\eppa(n) \in \mathcal O(n2^n)$. They actually proved a stronger variant of EPPA where one can extend all switching automorphisms, here we will use the slightly simplified presentation from Section~3 of~\cite{Hubicka2018EPPA}. We remark that the same construction has been independently discovered by Andr\'eka and N\'emeti~\cite{Andreka2019}.

\begin{theorem}[Evans--Hubička--Konečný--Nešetřil, 2018~\cite{eppatwographs}]\label{thm:hkn}
$$\eppa(n) \leq n2^{n-1}.$$
\end{theorem}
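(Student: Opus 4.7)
The plan is to construct, for any graph $G$ on $n$ vertices, an EPPA-witness $H$ with $|V(H)| = n \cdot 2^{n-1}$, following the valuation-graph approach of Evans--Hubi\v cka--Kone\v cn\'y--Ne\v set\v ril. Take $V(H) = V(G) \times X$ where $X = \mathbb{Z}_2^{V(G)}/\langle \mathbf{1}\rangle$ is the space of $\mathbb{Z}_2$-colourings of $V(G)$ modulo the all-ones function, so $|X| = 2^{n-1}$. I would define the adjacency by a symmetric $\mathbb{Z}_2$-parity formula involving the edge-indicator of $G$ and the coordinates $\chi(u), \chi(v), \psi(u), \psi(v)$, chosen so that (i) it is well-defined on the quotient (any shift of $\chi$ or $\psi$ by the all-ones function changes the sum by an even number), and (ii) a natural map $v \mapsto (v, x_v)$ is an induced embedding of $G$ into $H$.

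Next, I would identify a large automorphism group of $H$. The shift action of $X$ on the second coordinate preserves adjacency by construction. More importantly, the formula is tuned so that combining a permutation $\sigma \in \mathrm{Sym}(V(G))$ with an appropriate shift $\alpha_\sigma \in X$ yields an automorphism of $H$ of the form $(v, [\chi]) \mapsto (\sigma(v), [\chi \circ \sigma^{-1} + \alpha_\sigma])$ whenever $\sigma$ is a \emph{switching automorphism} of $G$ --- that is, whenever $\sigma$ maps $E(G)$ to a graph obtained from $G$ by switching at some vertex subset $S \subseteq V(G)$, the shift $\alpha_\sigma$ absorbing the switching vector $\mathbf{1}_S$ via the parity formula. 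The automorphism group of $H$ therefore contains the full switching-automorphism group of $G$, which is in general strictly larger than $\mathrm{Aut}(G)$.

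For the extension step, given a partial automorphism $\phi\colon A \to B$ of $G$, I would extend it to an automorphism of $H$ iteratively, adding one vertex at a time to the domain and using the shift-freedom in $X$ to satisfy each new constraint. The key algebraic fact is that $\phi$, being a partial graph isomorphism, is automatically a partial isomorphism of the associated two-graph $T(G)$ (which records edge-parity on triples); this preservation of triple-parity is precisely what ensures that the cocycle obstructions to finding the required shift at each step vanish. The main obstacle is exactly this combinatorial/cocycle step --- verifying that the $2^{n-1}$-fold shift-freedom is always sufficient, which reduces to a coboundary computation in $\mathbb{Z}_2$-cohomology on $V(G)$ whose solvability follows from $\phi$ preserving the triple-parity invariant. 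The complete verification, including the precise form of the adjacency and the interaction of shifts with switching-permutations, is the algebraic heart of the two-graph EPPA construction in~\cite{eppatwographs, Hubicka2018EPPA}.
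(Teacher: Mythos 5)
Your proposal is essentially the paper's own argument: both build the valuation graph of Evans--Hubi\v cka--Kone\v cn\'y--Ne\v set\v ril on $n\cdot 2^{n-1}$ vertices (pairs of a projection and a $\mathbb Z_2$-valuation), realise automorphisms as permutations of the projections composed with parity shifts of the valuations, embed $G$ via a canonical choice of valuations, and reduce extending a partial automorphism to solving a $\mathbb Z_2$-parity/coboundary condition. The only difference is presentational --- you use the two-graph (switching-class) formulation of~\cite{eppatwographs}, quotienting $\mathbb Z_2^{V(G)}$ by the all-ones vector, while the paper follows the simplified form from Section~3 of~\cite{Hubicka2018EPPA} with valuations on $[n]\setminus\{i\}$, the explicit adjacency $f(i')\neq f'(i)$, and the explicit correcting shifts $\theta_{a,b}$ --- and, like the paper's sketch, you defer the final cocycle verification to the cited sources.
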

We give a sketch of the proof which appears in Section~3 of~\cite{Hubicka2018EPPA} (note that it is fully self-contained and does not require any familiarity with the rest of~\cite{Hubicka2018EPPA}).
\begin{proof}[Sketch of a proof from~\cite{Hubicka2018EPPA}]
Fix $n$. Define a graph $H_n$ whose vertices are all pairs $(i, f)$ where $1\leq i\leq n$ (it is called the \emph{projection}) and $f$ is a function $[n]\setminus\{i\} \to \{0,1\}$ where $[n] = \{1, \ldots, n\}$. We call $f$ a \emph{valuation function for $i$}. Connect $(i,f)$ and $(i',f')$ if and only if $i\neq i'$ and $f(i')\neq f'(i)$. This will serve as an EPPA-witness for all graphs on at most $n$ vertices.

Note that for every permutation $\pi$ of $[n]$ the function $\theta_\pi \colon H_n \to H_n$, defined by $\theta_\pi(i, f) = (\pi(i), f')$ where $f'(j) = f(\pi^{-1}(j))$, is an automorphism of $H_n$. Similarly, given $a<b\in [n]$, the function $\theta_{a,b}\colon H_n \to H_n$, defined by $\theta_{a,b}(i, f) = (i, f')$ where $f'(j) = 1-f(j)$ if $\{a,b\} = \{i,j\}$ and $f'(j) = f(j)$ otherwise, is also an automorphism of $H_n$.

Given a graph $G$ with $\lvert G\rvert \leq n$, define an embedding $\psi\colon G\to H_n$ by assuming that $G = [n]$ and sending $\psi(i) = (i, f_i)$ where $f_i(j) = 1$ if and only if $j<i$ and $ij$ is and edge of $G$. Now every partial automorphism $\varphi$ of $G$ induces a partial permutation of $[n]$ which one can extend to a permutation $\pi$ of $[n]$. It turns out that there is a unique choice of a set of pairs $\{a,b\} \subseteq [n]$ such that $\theta_\pi$ composed with all such $\theta_{a,b}$'s is an automorphism of $H_n$ extending $\varphi$.
\end{proof}

\section{Lower bounds}
In this section we will present some lower bounds for EPPA numbers. The first and the only published lower bound has already been proved in Hrushovski's paper. The \emph{half graph} on $2m$ vertices is the bipartite graph on the vertex set $[2m]$ where $i<j$ form and edge if and only if $j\geq i+m$.
\begin{theorem}[Hrushovski, 1992~\cite{hrushovski1992}]\label{thm:hrushovski_half}
Every EPPA-witness of the half-graph on $2m$ vertices has at least $2^m$ vertices.
\end{theorem}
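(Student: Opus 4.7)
The plan is to prove the following stronger statement: in any EPPA-witness $H$ of the half-graph $G$ on $2m$ vertices, every subset $S$ of the left side $L:=\{\ell_1,\dots,\ell_m\}$ of $G$ is realised as $N_H(v)\cap L$ for some vertex $v\in V(H)$. Since distinct subsets of $L$ give distinct vertices of $H$, this immediately yields $|V(H)|\geq 2^m$.

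I would fix the labeling of $G$ so that $V(G) = L\cup R$ with $R=\{r_1,\dots,r_m\}$ and $\ell_i\sim r_j$ iff $j\geq i$; in particular $N_H(r_k)\cap L = \{\ell_1,\dots,\ell_k\}$ for each $k$. The key observation is that $L$ is an independent set in $G$, and hence \emph{every} bijection $\beta\colon L\to L$ is automatically a partial automorphism of $G$. Given $S\subseteq L$ of size $k$, I would pick such a $\beta_S$ with $\beta_S(\{\ell_1,\dots,\ell_k\}) = S$, use the EPPA property of $H$ to extend $\beta_S$ to an automorphism $\bar\beta_S$ of $H$, and set $v_S:=\bar\beta_S(r_k)$. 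Because $\bar\beta_S$ agrees with $\beta_S$ on $L$ it stabilises $L$ setwise, and since $\bar\beta_S$ is a graph automorphism of $H$ one obtains $N_H(v_S)\cap L = \bar\beta_S(N_H(r_k)\cap L) = \beta_S(\{\ell_1,\dots,\ell_k\}) = S$, as desired.

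There is really no substantive obstacle in this argument once the shattering reformulation is in place: all of the work is being done by the triviality of the induced subgraph on $L$, which makes any permutation of $L$ a partial automorphism of $G$ without any further verification. The only step that deserves a moment's care is the set-intersection swap $\bar\beta_S(N_H(r_k))\cap L = \bar\beta_S(N_H(r_k)\cap L)$, which is immediate from the fact that $\bar\beta_S$ is a bijection of $V(H)$ with $\bar\beta_S(L)=L$.
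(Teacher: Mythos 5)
Your argument is correct and is essentially the paper's own proof (abstracted there as Lemma~\ref{lem:hrus}): since $L$ is independent, any permutation of $L$ is a partial automorphism, and pushing $r_k$ forward along an extension realises each $k$-element subset of $L$ as a neighbourhood trace on $L$. The only cosmetic point is that your construction via $r_k$ only produces the \emph{nonempty} traces (there is no $r_0$), but the vertices of $L$ themselves have empty trace on $L$, so one still gets $2^m-1+m\geq 2^m$ distinct vertices, matching the count $m+\sum_{k}{m\choose k}$ from the paper's lemma.
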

Hrushovski shows that for any non-empty subset of one partition of the half graph, every EPPA-witness needs to contain a vertex which is connected to the subset and not connected to the rest of the partition. There are $2^m-1$ such subsets. The following lemma abstracts the key idea from Hrushovski's proof of Theorem~\ref{thm:hrushovski_half} and will be important for the majority of lower bounds in this section:

\begin{lemma}\label{lem:hrus}
Let $G$ be a graph on $n$ vertices containing an independent set $A$ of size $m$. Let $k_1 < \cdots < k_\ell$ be integers such that for every $1\leq i\leq \ell$ there exists $v_i\in G\setminus A$ which has exactly $k_i$ neighbours in $A$. Then $\eppa(G) \geq m + \sum_{i=1}^\ell {m\choose k_i}$.
\end{lemma}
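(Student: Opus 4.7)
The plan is to mimic and extend Hrushovski's argument from Theorem~\ref{thm:hrushovski_half}, producing a large collection of distinct vertices in any EPPA-witness of $G$ by exploiting the automorphisms that permute the independent set $A$ arbitrarily.

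Let $H$ be any EPPA-witness for $G$; without loss of generality assume $G$ is an induced subgraph of $H$. For each $i \in \{1, \ldots, \ell\}$ let $N_i \subseteq A$ denote the set of neighbours of $v_i$ in $A$, so $\lvert N_i\rvert = k_i$. First I would observe that, because $A$ is independent in $G$, every bijection $\sigma\colon A\to A$ is a partial automorphism of $G$ (both the domain and image induce the empty graph on $m$ vertices). So, given any $k_i$-element subset $S \subseteq A$, I may choose such a $\sigma$ with $\sigma(N_i) = S$, and then extend it to an automorphism $\alpha$ of $H$ using the EPPA property. I would then define $w_{S, i} := \alpha(v_i)$.

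The next step is to verify that $w_{S, i}$ has neighbourhood in $A$ (computed in $H$) equal to exactly $S$. Since $\alpha$ extends $\sigma$, it fixes $A$ setwise, so the neighbours of $w_{S,i}$ in $A$ are precisely $\alpha(N_H(v_i) \cap A)$. Because $G$ is induced in $H$, we have $N_H(v_i) \cap A = N_i$, and hence this set equals $\sigma(N_i) = S$. The same setwise-fixing observation gives $\alpha(V(H)\setminus A) = V(H)\setminus A$, so $w_{S, i} \notin A$ regardless of whether $k_i = 0$.

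To finish, I would argue that the vertices $w_{S, i}$, as $i$ ranges over $\{1,\ldots, \ell\}$ and $S$ ranges over the $k_i$-subsets of $A$, are pairwise distinct. Indeed, their neighbourhoods in $A$ are exactly the chosen sets $S$; for different $i\neq j$, the sizes $k_i \neq k_j$ differ, and for equal $i$ the sets $S$ themselves differ. This yields $\sum_{i=1}^\ell \binom{m}{k_i}$ distinct vertices of $V(H)\setminus A$, which together with $A$ itself gives
\[
\lvert H\rvert \;\geq\; m + \sum_{i=1}^\ell \binom{m}{k_i}.
\]
There is no serious obstacle; the only subtle point is insisting that the partial automorphism be a full bijection of $A$ (rather than merely sending $N_i \to S$), since without this one only recovers that $S$ is \emph{contained in} the neighbourhood of $w_{S,i}$, which would be too weak to guarantee distinctness.
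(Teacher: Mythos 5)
Your proof is correct and follows essentially the same route as the paper's: permute the independent set $A$ to map $N_i$ onto an arbitrary $k_i$-subset $S$, extend to an automorphism of the witness, and observe that the images of $v_i$ have pairwise distinct neighbourhoods within $A$ and lie outside $A$. Your write-up merely spells out the counting and the $k_i=0$ edge case that the paper leaves implicit.
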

\begin{proof}
Let $H$ be an EPPA-witness for $G$. We will prove that for every $X\subseteq A$ with $\lvert X\rvert = k_i$ for some $i$, there is a vertex $w\in H$ which is connected to every member of $X$ and no member of $A\setminus X$. This clearly implies the conclusion. Given such a set $X$, let $Y_i$ be the set of neighbours of $v_i$ inside $A$. By definition, $\lvert Y_i\rvert = \lvert X\rvert = k_i$, and thus there exists a permutation $f\colon A\to A$ such that $f(Y_i) = X$. As $f$ is a partial automorphism of $G$, it extends to an automorphism $g\colon H\to H$, and $g(v_i)$ is a vertex of $H$ connected to every member of $X$ and no member of $A\setminus X$.
\end{proof}
Note that complementation preserves EPPA numbers, hence an analogue of Lemma~\ref{lem:hrus} holds also for complete graphs instead of independent sets. More generally, if $G$ contains a subgraph $A$ with an orbit of injective $k$-tuples of size $m$, and a vertex which is connected precisely to members of one such tuple then the same argument gives $\eppa(G)\geq \frac{m}{k!}$

In this note, however, we will mostly be using a version of Lemma~\ref{lem:hrus} with only one $k_i$. For convenience, we state it as a corollary (with a slightly weaker conclusion).
\begin{corollary}\label{cor:more_general}
Let $G$ be a graph on $n$ vertices. Assume that it contains an independent set $A$ and a vertex $v$ which is connected to precisely $k$ vertices of $A$. Then $\eppa(G)\geq {\lvert A\rvert \choose k}$.\qed
\end{corollary}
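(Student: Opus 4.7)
The plan is essentially trivial, since the statement is a direct specialization of Lemma~\ref{lem:hrus}. First I would instantiate that lemma with $\ell = 1$ and $k_1 = k$, taking $v_1 = v$, which immediately yields
$$\eppa(G) \geq \lvert A\rvert + \binom{\lvert A\rvert}{k}.$$
Then I would simply drop the additive $\lvert A\rvert$ term to get the weaker bound $\eppa(G) \geq \binom{\lvert A\rvert}{k}$ claimed in the corollary. The point of stating this weaker form as a separate corollary is presumably convenience: the $\binom{\lvert A\rvert}{k}$ term dominates in all intended applications, and one rarely needs the additive improvement.

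For completeness, I would also spell out the underlying combinatorial idea in this single-$k$ case, so that the corollary can be cited as a black box: let $H$ be an EPPA-witness for $G$ and let $Y = N(v) \cap A$, so $\lvert Y\rvert = k$. For every $k$-subset $X \subseteq A$ pick a bijection $f\colon A \to A$ with $f(Y) = X$; since $A$ is independent, every bijection $A\to A$ is a partial automorphism of $G$, so $f$ extends to an automorphism $g$ of $H$, and $g(v)$ then has neighbourhood in $A$ exactly $X$. Distinct choices of $X$ give distinct vertices $g(v) \in H$, producing $\binom{\lvert A\rvert}{k}$ witnesses. There is no substantive obstacle; the corollary is a cosmetic restatement.
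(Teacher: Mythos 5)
Your proposal is correct and matches the paper exactly: the corollary is stated with an immediate \qed precisely because it is Lemma~\ref{lem:hrus} with $\ell=1$ and $k_1=k$, with the additive $\lvert A\rvert$ term discarded, and your spelled-out argument is the same orbit argument used in the paper's proof of that lemma. Nothing further is needed.
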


This allows us to improve the lower bound on $\eppa(n)$:
\begin{proof}[Proof of Theorem~\ref{thm:lower_bound}]
We will construct a graph $G$ as follows: The vertex set of $G$ is $[n]$ and $G$ is bipartite with one partition $[n-1]$ and the other partition $\{n\}$, where $mn$ is an edge of $G$ if and only if $m\leq \frac{n-1}{2}$. Observe that $[n-1]$ forms an independent set and that vertex $n$ is connected to precisely $\lfloor \frac{n-1}{2}\rfloor$ vertices of the independent set, hence, by Corollary~\ref{cor:more_general}, $\eppa(n) \geq {n-1 \choose \lfloor \frac{n-1}{2}\rfloor}$. This is the central binomial coefficient with known asymptotics (see e.g.~\cite[Chapter 5.2]{MatousekProbabilistic}, it also follows easily from Stirling's approximation). 
\end{proof}
See Question~\ref{q:bad_graph} for more discussion of this example. Next, we focus on bounds parameterised by the maximum degree.

\begin{corollary}\label{cor:general}
Let $G$ be a graph on $n$ vertices with maximum degree $d$. Let $k$ be the maximum size of an independent set in the neighbourhood of a vertex of $G$. Then every EPPA-witness of $G$ has at least ${\lceil n/(d+1)\rceil \choose k}$ vertices.
\end{corollary}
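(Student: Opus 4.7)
The plan is to produce an independent set $A$ of $G$ of size at least $\lceil n/(d+1)\rceil$ together with a vertex $v\notin A$ having exactly $k$ neighbours in $A$, and then invoke Corollary~\ref{cor:more_general}.

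By the definition of $k$, I would first fix a vertex $u$ of $G$ and an independent set $I\subseteq N(u)$ with $|I|=k$. The crucial observation is that $I$ is then automatically a \emph{maximal} independent subset of $G[N(u)]$: any $y\in N(u)\setminus I$ must have a neighbour inside $I$, for otherwise $I\cup\{y\}$ would be a larger independent set inside $N(u)$. Hence $N(u)\setminus I\subseteq N(I)$, and since $u$ itself lies in $N(I)$ (being adjacent to every vertex of $I$), we obtain $N[u]\setminus N(I)=I$. Combined with $I\cap N(I)=\emptyset$, which holds because $I$ is independent, this means that the induced subgraph $H:=G[V(G)\setminus N(I)]$ contains $I$, avoids both $u$ and $N(u)\setminus I$, and has no edges between $I$ and $V(H)\setminus I$; in other words, $I$ becomes a set of isolated vertices in $H$.

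Any independent set $A\subseteq V(H)$ containing $I$ therefore satisfies $A\cap N(u)=I$, so $v:=u$ is a vertex outside $A$ with exactly $k$ neighbours in $A$. To bound $|A|$, I would take $A=I\cup A'$, where $A'$ is a maximum independent set of $H-I$, which has maximum degree at most $d$. A short counting argument gives $|N(I)|\leq \sum_{i\in I}d_G(i)-(k-1)\leq kd-k+1$, since $u\in N(i)$ for every $i\in I$ and so is overcounted $k-1$ times in $\sum_i d_G(i)$. Consequently $|V(H-I)|=n-|N(I)|-k\geq n-kd-1$, and the standard greedy independence-number bound yields $\alpha(H-I)\geq \lceil(n-kd-1)/(d+1)\rceil$. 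Therefore
\[
|A|\;\geq\; k+\left\lceil\frac{n-kd-1}{d+1}\right\rceil\;\geq\;\left\lceil\frac{n+k-1}{d+1}\right\rceil\;\geq\;\left\lceil\frac{n}{d+1}\right\rceil,
\]
the last step using $k\geq 1$ (the case $k=0$ is vacuous, as $\binom{\lceil n/(d+1)\rceil}{0}=1$). Corollary~\ref{cor:more_general} then yields $\eppa(G)\geq \binom{|A|}{k}\geq \binom{\lceil n/(d+1)\rceil}{k}$.

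The main point to get right is the inclusion $N(u)\setminus I\subseteq N(I)$: without this, one would have to remove $N(u)\setminus I$ from the ambient graph before extending $I$ greedily, and the resulting bound on $|A|$ would only be $\lceil(n+2k-d-1)/(d+1)\rceil$, which falls below $\lceil n/(d+1)\rceil$ whenever $k<(d+1)/2$. Using the maximality of $I$ inside $N(u)$ to collapse the two removals into the single deletion of $N(I)$ is what makes the clean bound go through.
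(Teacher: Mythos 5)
Your proof is correct and follows essentially the same route as the paper's: extend a maximum independent set $I$ in the neighbourhood of $u$ greedily to an independent set $A$ of size at least $\lceil n/(d+1)\rceil$, note that maximality of $I$ inside $N(u)$ forces $A\cap N(u)=I$, and apply Corollary~\ref{cor:more_general}. The paper packages the size bound as the standard fact that a maximal independent set in a graph of maximum degree $d$ is dominating and hence has at least $n/(d+1)$ vertices, whereas you obtain the same bound by explicitly deleting $N(I)$ and counting; the difference is only in bookkeeping.
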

\begin{proof}
Let $v$ be a vertex with an independent set $X$ of size $k$ in its neighbourhood. Create greedily an independent set $A$ of vertices of $G$ which contains $X$ (and, hence, no other neighbour of $v$). We have that $\lvert A\rvert\geq \lceil \frac{n}{d+1}\rceil$ and we can apply Corollary~\ref{cor:more_general}.
\end{proof}

\begin{corollary}\label{cor:bounded_quadratic}
Let $G$ be a graph on $n$ vertices with maximum degree $d$. Then either $G$ is a subgraph of a finite homogeneous graph, or the smallest EPPA-witness of $G$ has at least $\Omega((n/d)^2)$ vertices.
\end{corollary}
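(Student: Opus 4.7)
The plan is to invoke Corollary~\ref{cor:general} with $k=2$ whenever possible, and to argue that the only obstruction to doing so is precisely the homogeneous-subgraph case. Let $k$ denote the largest size of an independent set contained in the neighbourhood of some vertex of $G$, as in Corollary~\ref{cor:general}. If $k \geq 2$, then Corollary~\ref{cor:general} immediately gives $\eppa(G) \geq \binom{\lceil n/(d+1)\rceil}{2}$. Expanding this binomial coefficient and keeping only the dominant term yields the desired $\Omega((n/d)^2)$; a routine estimate confirms that the implicit constant can be taken uniformly in $n$ and $d$ (for $n \geq 2(d+1)$ this is direct, and for smaller $n$ the ratio $(n/d)^2$ is bounded by an absolute constant, so the trivial inequality $\eppa(G) \geq n$ already suffices).

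The remaining case is $k \leq 1$, i.e.\ when the neighbourhood of every vertex of $G$ is a clique. I would then prove, via a standard shortest-path argument, that $G$ must be a disjoint union of cliques: if some connected component contained two vertices at distance at least two, the middle vertex of a shortest path between them would have two non-adjacent neighbours in its neighbourhood, contradicting the assumption. Hence every connected component has diameter at most one and is itself a clique.

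To finish, I would exhibit such a $G$ as an induced subgraph of a finite homogeneous graph. If $G$ has $t$ components, each of size at most $s$, then choosing a subset of the appropriate size in each of $t$ disjoint copies of $K_s$ realises $G$ as an induced subgraph of the disjoint union of $t$ copies of $K_s$ (the induced condition is automatic since components of $G$ are non-adjacent), and by Gardiner's classification recalled in Section~\ref{sec:upper} this is a finite homogeneous graph. Taking the contrapositive, if $G$ is not a subgraph of a finite homogeneous graph then necessarily $k \geq 2$, and the first case delivers the stated bound. I do not expect any genuine obstacle; the only point to notice is that Gardiner's classification is precisely calibrated so that the ``all neighbourhoods are cliques'' case is absorbed by it.
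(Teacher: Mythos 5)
Your proof is correct and follows essentially the same route as the paper: if some vertex has two non-adjacent neighbours, Corollary~\ref{cor:general} yields the quadratic bound, and otherwise every neighbourhood is a clique, so $G$ is a disjoint union of cliques and embeds into a finite homogeneous graph by Gardiner's classification. You spell out a few details the paper leaves implicit (the shortest-path argument and the degenerate regime where $(n/d)^2=\mathcal O(1)$ and the trivial bound $\eppa(G)\geq n$ suffices), but the substance is identical.
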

\begin{proof}
If the neighbourhood of every vertex forms a clique then $G$ is a disjoint union of complete graphs and hence a subgraph of a finite homogeneous graph. Otherwise the size of the largest independent set in the neighbourhood of some vertex is at least 2 and Corollary~\ref{cor:general} gives the desired conclusion.
\end{proof}

\begin{corollary}\label{cor:triangle-free}
Let $G$ be a triangle-free graph on $n$ vertices with maximum degree $d$. Then $\eppa(G)\geq {\lceil n/(d+1)\rceil \choose d}$. If $d$ is constant then $\eppa(G)\in \Theta(n^d)$.
\end{corollary}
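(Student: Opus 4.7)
The plan is to reduce this to Corollary~\ref{cor:general} by exploiting triangle-freeness and then combine with the Herwig--Lascar upper bound from Theorem~\ref{thm:hl}.

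For the lower bound, I would first select a vertex $v \in G$ of maximum degree $d$. Because $G$ is triangle-free, the neighbourhood $N(v)$ contains no edges, so it is itself an independent set of size exactly $d$. Thus the parameter $k$ in Corollary~\ref{cor:general} (the maximum size of an independent set in the neighbourhood of some vertex) satisfies $k \geq d$, and since $k \leq d$ trivially, we have $k = d$. Plugging this into Corollary~\ref{cor:general} immediately yields
\[
\eppa(G) \geq \binom{\lceil n/(d+1)\rceil}{d}.
\]

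For the asymptotic statement, assume $d$ is a constant. The lower bound $\binom{\lceil n/(d+1)\rceil}{d}$ is a polynomial in $n$ of degree $d$ (with leading coefficient $1/(d!(d+1)^d)$), hence it is $\Omega(n^d)$. For the matching upper bound, Theorem~\ref{thm:hl} states that graphs with bounded maximum degree $d$ admit EPPA-witnesses of size $\mathcal O(n^d)$. Combining the two estimates gives $\eppa(G) \in \Theta(n^d)$.

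There is no serious obstacle here; the only thing to verify carefully is the identification $k = d$ via triangle-freeness, which is where the hypothesis is used, and noting that Corollary~\ref{cor:general} is stated exactly in the form needed.
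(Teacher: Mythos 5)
Your proof is correct and follows essentially the same route as the paper: triangle-freeness makes the neighbourhood of a maximum-degree vertex an independent set of size $d$, so Corollary~\ref{cor:general} applies with $k=d$, and Theorem~\ref{thm:hl} supplies the matching $\mathcal O(n^d)$ upper bound for constant $d$. The identification $k=d$ and the asymptotics of the binomial coefficient are both handled correctly.
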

\begin{proof}
If $G$ is triangle-free then the neighbourhoods of vertices contain no edges. Apply Corollary~\ref{cor:general} and use Theorem~\ref{thm:hl} for the upper bound.
\end{proof}

\begin{proof}[Proof of Theorem~\ref{thm:degrees}]
The first lower bound is given by Corollary~\ref{cor:bounded_quadratic} and the triangle-free case is handled by Corollary~\ref{cor:triangle-free}.
\end{proof}

Theorem~\ref{thm:degrees} implies, in particular, that cycles have quadratic EPPA numbers. The following corollary makes the bounds more precise:
\begin{corollary}\label{cor:cycles}
Let $C_n$ denote the cycle on $n$ vertices. For every $n$ we have $\eppa(C_n) \leq {n\choose 2}$. If $n$ is even then $\eppa(C_n) \geq \frac{n(n+2)}{8}$, and if $n$ is odd then $\eppa(C_n)\geq \frac{(n-1)(n+5)}{8}$.
\end{corollary}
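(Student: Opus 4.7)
The upper bound is immediate from Theorem~\ref{thm:hl}: the cycle $C_n$ has $n$ vertices, $n$ edges, and maximum degree $d=2$, so $\eppa(C_n) \leq \binom{2n-n}{2} = \binom{n}{2}$. For the lower bounds, Corollary~\ref{cor:triangle-free} would only yield roughly $\binom{n/3}{2} \sim n^2/18$, so the plan is instead to invoke the stronger Lemma~\ref{lem:hrus} directly, using a \emph{maximum} independent set $A$ in $C_n$ and exploiting the fact that every vertex outside $A$ has either one or two neighbours in $A$.

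For even $n$, label the vertices $0,1,\ldots,n-1$ in cyclic order and take $A$ to be the set of odd-indexed vertices, so $\lvert A\rvert = n/2$ and $A$ is independent. Every even-indexed vertex is flanked by two odd-indexed vertices (also across the wrap-around, since both $1$ and $n-1$ are odd), so every vertex of $C_n\setminus A$ has exactly two neighbours in $A$. Lemma~\ref{lem:hrus} applied with the single value $k_1=2$ yields
\[
\eppa(C_n)\;\geq\; \tfrac{n}{2}+\binom{n/2}{2}\;=\;\tfrac{n(n+2)}{8}.
\]

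For odd $n$, take $A=\{1,3,5,\ldots,n-2\}$, which is independent of size $(n-1)/2$. The vertices outside $A$ are the even-indexed vertices together with $n-1$; the interior even-indexed vertices $2,4,\ldots,n-3$ still have two neighbours in $A$, but the two ``boundary'' vertices $0$ and $n-1$ each have only one neighbour in $A$ (their other neighbour, $n-1$ and $0$ respectively, lies outside $A$ because the wrap-around breaks the alternation). Applying Lemma~\ref{lem:hrus} with $k_1=1$ and $k_2=2$ gives
\[
\eppa(C_n)\;\geq\; \tfrac{n-1}{2}+\binom{(n-1)/2}{1}+\binom{(n-1)/2}{2}\;=\;(n-1)+\tfrac{(n-1)(n-3)}{8}\;=\;\tfrac{(n-1)(n+5)}{8},
\]
after routine simplification. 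The only subtle point is the careful choice of $A$ in the odd case so as to produce vertices outside $A$ with exactly one neighbour in $A$, which is what unlocks the second binomial term and is ultimately responsible for the factor $(n+5)$ rather than $(n-3)$.
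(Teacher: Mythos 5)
Your proof is correct and takes essentially the same route as the paper: the upper bound from Theorem~\ref{thm:hl} with $d=2$, $m=n$, and the lower bounds from Lemma~\ref{lem:hrus} applied to a maximum independent set of $C_n$, using $k_1=2$ in the even case and $k_1=1$, $k_2=2$ in the odd case. Your explicit choice of $A$ and the verification of the neighbour counts at the wrap-around are just a more detailed write-up of the paper's argument, and the arithmetic checks out.
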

Note that $C_3$, $C_4$ and $C_5$ are homogeneous and $C_6$ embeds into the homogeneous graph $L(K_{3,3})$. In a paper in preparation~\cite{BradleyEPPA}, we show that $\eppa(C_6) = \lvert L(K_{3,3})\rvert$ and $\eppa(C_7) = 21$, that is, the upper bound is tight for $C_7$.
\begin{proof}
The upper bound is given by Theorem~\ref{thm:hl}. The largest independent set in $C_n$ has size $\lfloor\frac{n}{2}\rfloor$ and (for $n\geq 4$) there are two vertices in the independent set which have a common neighbour (and this neighbour has no other edges). Moreover, if $n$ is odd, there is also a vertex which is connected to exactly one member of the independent set. Lemma~\ref{lem:hrus} thus gives the lower bounds.
\end{proof}
See also Problem~\ref{prob:cycles} and Conjecture~\ref{conj:cycles}. Next, we continue with a proof of Theorem~\ref{thm:bwc} (note that subgraphs of homogeneous graphs need to be excluded in the statement as they can have arbitrarily small $\eppa(G) - \lvert G\rvert$):
\begin{proof}[Proof of Theorem~\ref{thm:bwc}]
We say that a graph is $k$-homogeneous if any isomorphism between induced subgraphs on at most $k$ vertices extends to an automorphism. We use two group-theoretic ingredients in our proof (a permutation group $\Gamma$ on a set $X$ is \emph{transitive} if for every $x,y\in X$ there is $g\in \Gamma$ such that $gx = y$):
\begin{enumerate}
\item\label{item:neumann} (Neumann’s Separation Lemma~\cite{Neumann1975}). Let $\Gamma$ be a transitive permutation group on a set $X$ with $\lvert X\rvert = n$ and let $A,B\subseteq X$. If $\lvert A\rvert\lvert B\rvert < n$ then there exists $g\in \Gamma$ such that $gA\cap B = \emptyset$.
\item\label{item:cameron} (Cameron~\cite{Cameron1980}). A 5-homogeneous graph is homogeneous.
\end{enumerate}
Let $G$ be a graph on $n$ vertices with $\eppa(G) = m < \frac{5}{4}n$, and let $H$ be an EPPA-witness for $G$ with $m$ vertices. It is easy to see that $\Aut(H)$ is transitive (all vertices of $G$ need to lie in the same orbit, and if there were multiple orbits of vertices of $H$ then taking the subgraph induced on the orbit of vertices of $G$ would yield a smaller EPPA-witness). Since $\lvert H \setminus G\rvert < \frac{m}{5}$, if $C\subseteq H$ with $\lvert C\rvert \leq 5$, by~(\ref{item:neumann}) there exists $g\in \Aut(H)$ such that $gC \cap (H\setminus G) = \emptyset$, that is $gC \subseteq G$.

Let $A, B$ be subsets of vertices of $H$ with $\lvert A\rvert = \lvert B\rvert \leq 5$ and let $f\colon A\to B$ be a partial automorphism. By the previous paragraph there are $h_A, h_B\in \Aut(H)$ such that $h_A(A) \subseteq G$ and $h_B(B) \subseteq G$. Hence, $h_Bfh_A^{-1}$ is a partial automorphism of $G$ which extends to an automorphism $g\in \Aut(H)$. Consequently, $h_B^{-1}gh_A$ is an automorphism of $H$ extending $f$, that is, $H$ is 5-homogeneous, and so it is homogeneous by~(\ref{item:cameron}).
\end{proof}
We conjecture that a quadratic lower bound is true (see Conjecture~\ref{conj:quadratic}). Note that the same argument shows that if $G$ has an EPPA-witness on fewer than $\frac{t}{t-1}\lvert G\rvert$ vertices then it is a subgraph of a $t$-homogeneous graph. Using the Classification of Finite Simple Groups, these graphs are all known. In particular, there are only finitely many finite $4$-homogeneous graphs which are not homogeneous~\cite{Buczak,Cameron1985}.

Theorem~\ref{thm:bwc} gives a lower bound for all graphs except for subgraphs of the homogeneous ones. Theorem~\ref{thm:random} says that, in the probabilistic sense, almost all graphs have at least {\sl almost} quadratic EPPA numbers.

\begin{proof}[Proof of Theorem~\ref{thm:random}]
Fix some $\varepsilon > 0$ which will be determined later. Asymptotically almost surely, $G(n,1/2)$ contains an independent set $I$ of size $(2-\varepsilon)\log_2(n)$ (this is a standard result in random graphs proved originally by Matula~\cite[Theorem 6]{Matula1976}). 
Again, asymptotically almost surely, there is a vertex outside of $I$ connected to precisely $k$ members of $I$ with $(1-\varepsilon)\log_2(n) \leq k \leq \log_2(n)$ (this follows by basic computation, see e.g.~\cite[Section 7]{MatousekProbabilistic}).
Corollary~\ref{cor:more_general} gives $$\eppa(G(n,1/2))\geq {(2-\varepsilon)\log_2(n) \choose k}.$$ Monotonicity and symmetry of binomial coefficients imply that
$$\eppa(G(n,1/2))\geq {(2-\varepsilon)\log_2(n) \choose k} \geq {(2-\varepsilon)\log_2(n) \choose (1-\varepsilon)\log_2(n)} \geq {2(1-\varepsilon)\log_2(n) \choose (1-\varepsilon)\log_2(n)}.$$
The standard bound on the central binomial coefficient (see e.g.~\cite[Chapter 5.2]{MatousekProbabilistic}) implies, for some constant $c>0$, $$\eppa(G(n,1/2)) \geq c\frac{4^{(1-\varepsilon)\log_2(n)}}{\sqrt{(1-\varepsilon)\log_2(n)}} \in \Omega(n^{2-2\varepsilon} / \sqrt{\log(n)}),$$ hence picking $\varepsilon < \frac{\delta}{2}$ gives the desired result.
\end{proof}
As one instance of Problem~\ref{prob:random} we propose the problem of improving these bounds. Theorem~\ref{thm:hl} implies that bounded degree graphs have polynomial EPPA numbers. On the other hand, Theorem~\ref{thm:lower_bound} presents graphs of average degree 1 which have exponential EPPA numbers. We conclude this section with a proof of Theorem~\ref{thm:random_bddegree} which says that, in fact, almost all graphs of constant average degree have superpolynomial EPPA numbers.

\begin{proof}[Proof of Theorem~\ref{thm:random_bddegree}]
Fix $c$ and $d$. It is well-known that there exists a constant $c'$ such that, asymptotically almost surely, $G(n,c/n)$ has an independent set of size at least $c'n$. (A constant fraction of the vertices induce a forest and, for example, all leaves form a linear-size independent set, see e.g.~\cite[Chapter 11]{Alon2004}.) Hence we can assume that $A$ is such an independent set. Let $v$ be an arbitrary vertex not in $A$ and let $p_d$ be the probability that $v$ has exactly $d$ neighbours in $A$. We have
\begin{align*}
p_d &= {c'n \choose d}\left(c/n\right)^d \left(1-c/n\right)^{c'n-d}\\
&\geq \left(c'n/d\right)^d\left(c/n\right)^d e^{-2\frac{c}{n}(c'n-d)},
\end{align*}
where we used standard bounds ${n\choose k} \geq (\frac{n}{k})^k$, and $1-p \geq e^{-2p}$ for $0\leq p\leq \frac{1}{2}$. Note that $\left(c'n/d\right)^d\left(c/n\right)^d$ is a positive constant. Similarly, $c'n-d \in \Theta(n)$, and so $e^{-2\frac{c}{n}(c'n-d)} \in \Theta(1)$. Consequently, $p_d\in \Theta(1)$ and hence, asymptotically almost surely, there will be a vertex $v$ in $\eppa(G(n,c/n))$ which is connected to exactly $d$ members of $A$, which allows us  to apply Corollary~\ref{cor:more_general}.
\end{proof}
We remark that a result of Frieze~\cite{Frieze1990} should make it possible to extend Theorem~\ref{thm:random_bddegree} to graphs $G(n,p(n))$ for functions $p(n)$ which are asymptotically slightly larger than $c/n$.

\section{Further directions and questions}
EPPA-witnesses are very symmetric objects. First, it is easy to see that smallest EPPA-witnesses are vertex transitive, and smallest EPPA-witnesses which moreover minimize the number of edges are also edge-transitive. Second, if $H$ is an EPPA-witness for $G$ then orbits of tuples of vertices $G$ in $H$ are determined by the isomorphism type of the subgraph induced on the tuple. Thus, in particular, every vertex of $G$ has the same degree in $H$, every pair of vertices connected by an edge has the same cardinality of the intersection of their neighbourhoods, and similarly for non-edges (cf. \emph{strongly regular graphs}, see e.g.~\cite{Godsil2001}), and so on.

Thus, the EPPA number can be seen as a measure of asymmetry of a graph -- a very asymmetric graph should be hard to symmetrise and hence have large EPPA number (although with the current results it is possible that in reality EPPA numbers only measure something much simpler such as the maximum degree). Besides this and it being a nice problem with elegant results, we believe that the study of EPPA numbers will lead to an improved understanding of structural properties which make a graph easy or hard for EPPA, as well as to new methods for proving upper bounds. In turn, these new methods might help solve some important open problems in the wider area of EPPA for structures, such as the questions presented in the following subsection:

\subsection{Existence of EPPA-witnesses for classes of structures}
\begin{question}[Herwig--Lascar, 2000~\cite{herwig2000}]\label{q:hl}
Does the class of all finite tournaments have EPPA? Or explicitly: Is it true that for every finite tournament $A$ there is a finite tournament $B$ containing $A$ as a sub-tournament such that every isomorphism of sub-tournaments of $A$ extends to an automorphism of $B$?
\end{question}
This is, perhaps, the biggest open problem of the area. Herwig and Lascar show that it is equivalent to a statement about the \emph{profinite topology} (or, more precisely, \emph{odd-adic topology}) on free groups~\cite{herwig2000}. One of the reasons why this problem has, so far, eluded all attempts at solving it, is that all automorphisms of tournaments have odd order, while the existing methods for proving EPPA for general structures (see e.g.~\cite{Hubicka2018EPPA} or~\cite{Konecny2023phd}) rely heavily on automorphisms of even orders. We remark that Huang, Pawliuk, Sabok, and Wise~\cite{Sabok} proved that a custom-tailored class of hypertournaments does not have EPPA by showing a failure of an equivalent profinite topology statement.

Hubička, Jahel, Konečný, and Sabok~\cite{HubickaSemigeneric} suggest a weakening of Question~\ref{q:hl} which allows EPPA-witnesses to have some even-order automorphisms, but which still seems to resist the existing methods.
\begin{question}[Hubička--Jahel--Konečný--Sabok, 2023~\cite{HubickaSemigeneric}]
Is there a natural number $\ell$ such that for every finite tournament $A$ there is a finite directed graph $B$ satisfying the following:
\begin{enumerate}
	\item $B$ contains $A$ as a subgraph,
	\item $B$ contains no bi-directional edges,
	\item\label{q:tour:3} $B$ contains no independent sets of size $\ell$, and
	\item every isomorphism of subgraphs of $A$ extends to an automorphism of $B$?
\end{enumerate}
\end{question}
It is easy to obtain such directed graphs $B$ without condition~(\ref{q:tour:3}), see Problem~\ref{prob:directed}. Question~\ref{q:hl} is precisely this question for $\ell=2$.

In the spirit of this paper, one can ask for bounds on EPPA numbers for (small) tournaments. There are two finite homogeneous tournaments, the 1-vertex tournament, and the oriented triangle~\cite{lachlan1984countable}. At present, there is only one other tournament EPPA-witness known to us, namely the seven-vertex Payley tournament which is an EPPA-witness for the transitive triangle.\footnote{The fourth author learned about the Payley tournament EPPA-witness from Sofia Brenner.}
It would thus be interesting to see some non-trivial lower bounds for transitive tournaments on, say, three or four vertices. As a heuristic for judging how surprising a lower bound for a concrete tournament is, one would expect that constructing EPPA-witnesses for tournaments to be at least as difficult as constructing triangle-free EPPA-witnesses for graphs (in both cases one has a single binary relation and a condition on triples of vertices), see Section~\ref{sec:kkfree}.

\begin{problem}
Prove bounds for the EPPA numbers of some (small) tournaments.
\end{problem}
We hope that working on this problem will lead to some observations useful even in the context of Question~\ref{q:hl}.

\medskip

Besides tournaments and directed graphs with no large independent sets, there are also other interesting classes of structures for which EPPA is open:
\begin{question}[Hubička--Konečný--Nešetřil, 2019~\cite{Hubicka2018EPPA}]
Does the class of all finite partial Steiner triple systems have EPPA for closed substructures?
\end{question}
A partial Steiner triple system is a 3-uniform hypergraph where every pair of vertices is together in at most one hyperedge. In these structures, a pair of vertices in a hyperedge defines a unique third vertex, thereby giving rise to binary closures. It is then easy to construct counterexamples to EPPA for the standard hypergraph embeddings. However, the natural morphisms for this class respect closures, that is, a sub-hypergraph $A\subseteq B$ is a Steiner sub-system if and only if for every $u,v\in A$, if there is $w\in B$ such that $uvw$ form a triple then $w\in A$ (these are called \emph{strongly induced subsystems} by Bhat, Nešetřil, Reiher, and R\"odl~\cite{bhat2016ramsey}). If we only consider such closed substructures and partial automorphisms between them, the question remains open.

\begin{question}[Evans--Hubička--Konečný--Nešetřil, 2019~\cite{eppatwographs}]
Does the class of all finite graphs with complementing embeddings have EPPA?
\end{question}
Given graphs $G,H$, a map $f\colon G\to H$ is a \emph{complementing embedding} if either $f$ is an embedding $G\to H$, or $f$ is an embedding $G\to \bar{H}$ where $\bar{H}$ is the complement of $H$. In~\cite{eppatwographs}, Evans, Hubička, Konečný, and Nešetřil prove EPPA for graphs with \emph{switching embeddings} (where one can pick a subset of vertices and switch edges with one endpoint in this subset to non-edges and vice versa), which is one of the five so-called \emph{reducts} of the countable random graph (corresponding to two-graphs), see e.g.~\cite{Thomas1991}. Graphs with complementing embeddings are another reduct of the random graph. In his Bachelor thesis, Beliayeu~\cite{Beliayeu2023bc} proved EPPA for the class of all finite graphs with loops with complementing embeddings (that is, every vertex may, but does not have to, have a loop, and loops also get complemented). This created a particularly interesting situation: It is open whether graphs with complementing embeddings have EPPA, but there are currently two \emph{expansions} of graphs with complementing embeddings that are known to have EPPA: Graphs with normal embeddings, and graphs with loops with complementing embeddings. Moreover, both of them can be seen as reducts of countable random graphs with a generic unary mark (i.e., unary relation), and they are incomparable. It would be interesting to see if the meet of these two expansions are graphs with complementing embeddings, and if not then whether the meet has EPPA.

\subsection{Bounds on EPPA numbers for graphs}
Besides these questions, there are of course many open problems about EPPA numbers for graphs, let us pose a few of them:

\begin{question}
There is still a factor $n^{3/2}$ difference between the lower bound and the upper bound for $\eppa(n)$. What is the correct bound?
\end{question}
We conjecture that $\Omega(\frac{2^n}{\sqrt{n}})$ is not a tight lower bound. The reason for this conjecture is that, in Theorem~\ref{thm:lower_bound}, we only needed partial automorphisms which do not have vertex $n$ in the domain. This leaves a plethora of other automorphisms to use to improve the bound. In particular, we can send vertex $n$ to any vertex from $[n-1]$ and then, possibly, arbitrarily permute vertices from $[n-1]$ which are not connected to $n$ and not the image of $n$. It seems that these partial automorphisms force EPPA-witnesses to be much more complicated. This motivates the following question:
\begin{question}\label{q:bad_graph}
Let $G$ be the graph from Theorem~\ref{thm:lower_bound}, that is, the $(\lfloor n/2\rfloor+1)$-vertex star together with $n-(\lfloor n/2\rfloor+1)$ isolated vertices. What is the EPPA number of $G$? Can one at least improve one of the bounds $\Omega(2^n/\sqrt{n}) \leq \eppa(G) \leq n2^{n-1}$?
\end{question}

We intentionally ask for an asymptotic improvement of the lower bound and an arbitrary improvement of the upper bound. The reason is that while one can likely improve the lower bound slightly by making a more precise computation, or possibly by showing that in order to extend a few partial automorphism with the high-degree vertex in the domain, the EPPA-witnesses need to be a bit larger, if one denotes by $\Gamma$ the group generated by the automorphisms of $H_n$ (see Theorem~\ref{thm:hkn}) needed to extend partial automorphisms of $G$ then all vertices of $H_n$ lie in the orbit of vertices of $G$ under the action of $\Gamma$. Simply said, in order to improve the upper bound even by a constant, one will likely need to make at least some changes to the construction and we do not see any obvious way of doing so.

There are other concrete graphs classes which we believe are good candidates to look at:
\begin{problem}\label{prob:cycles}
Corollary~\ref{cor:cycles} says that $\frac{1}{8}n^2 + o(n^2) \leq \eppa(C_n) \leq \frac{1}{2}n^2 + o(n^2)$ where $C_n$ is the cycle on $n$ vertices. Find the correct constant for the quadratic term.
\end{problem}
For the same reason as above we expect that the lower bound is not tight. In fact, we conjecture that the upper bound is tight:
\begin{conjecture}\label{conj:cycles}
If $n\geq 7$ then $\eppa(C_n) = {n\choose 2}$.
\end{conjecture}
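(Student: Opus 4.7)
The upper bound $\eppa(C_n)\le\binom{n}{2}$ follows from Corollary~\ref{cor:cycles} (equivalently, from Theorem~\ref{thm:hl} with $d=2$, $m=n$), realised by the Herwig--Lascar Kneser-type witness whose vertex set is $\binom{E(C_n)}{2}$ and in which two pairs of edges are adjacent iff they share an edge. The task is therefore to prove the matching lower bound for $n\ge 7$.

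The plan is as follows. Fix an arbitrary EPPA-witness $H$ for $C_n$ and, following the argument at the start of the proof of Theorem~\ref{thm:bwc}, reduce to the case that $\Aut(H)$ is transitive on $V(H)$. We then aim to produce an injection $\psi\colon\binom{E(C_n)}{2}\to V(H)$. Pairs $\{e,f\}$ of edges sharing a vertex $v$ of $C_n$ are mapped to $v$, which accounts for $n$ vertices. For each of the $\tfrac{n(n-3)}{2}$ pairs $\{e,f\}$ of disjoint edges with endpoint set $S_{e,f}\subseteq V(C_n)$, the plan is to associate a distinct vertex $\psi(\{e,f\})\in V(H)$ whose neighbourhood in $V(C_n)$ is cleanly determined by $S_{e,f}$ (ideally, equal to $S_{e,f}$). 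Distinctness is then automatic, as different disjoint pairs give different $4$-sets. Starting from the existence of such a vertex for a single fixed pair $\{e_0,f_0\}$, one would transport it to every other $\{e,f\}$ by applying EPPA to the partial automorphism of $C_n$ carrying $S_{e_0,f_0}$ to $S_{e,f}$, after additionally checking that the extending automorphism of $H$ does not introduce further edges into $V(C_n)$.

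The main obstacle is the initial existence step: forcing a vertex of $H$ whose $V(C_n)$-neighbourhood is exactly the four vertices of two disjoint edges. Lemma~\ref{lem:hrus} applied to a maximum independent set $A\subseteq V(C_n)$ only produces vertices with prescribed neighbourhood of size $1$ or $2$ \emph{inside} $A$ and gives no control over the rest of their $V(C_n)$-neighbourhood; this is precisely why Corollary~\ref{cor:cycles} yields only roughly $n^2/8$, a factor of $4$ short of the target $\binom{n}{2}\sim n^2/2$. Closing the gap will likely require a substantial refinement: one route is to strengthen Lemma~\ref{lem:hrus} so that it prescribes the entire $V(C_n)$-neighbourhood of some vertex (not only its intersection with an independent set), using the rich dihedral symmetry of long induced paths available in $C_n$ for $n\ge 7$. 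A second route is a Neumann--Cameron style argument in the spirit of Theorem~\ref{thm:bwc}: if $|H|$ is only slightly below $\binom{n}{2}$, one might hope to show that $H$ is so homogeneous that it must contain a copy of the Herwig--Lascar witness as an induced subgraph, forcing $|H|\ge\binom{n}{2}$. Both routes are non-trivial, which is the reason the statement is recorded only as a conjecture.
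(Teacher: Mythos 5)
The statement you are asked about is recorded in the paper as a \emph{conjecture}, not a theorem: the paper proves only the upper bound $\eppa(C_n)\le\binom{n}{2}$ (Theorem~\ref{thm:hl}, via the Kneser-type witness, exactly as you describe) and a lower bound of order $n^2/8$ (Corollary~\ref{cor:cycles}), and it explicitly leaves the matching lower bound open (Problem~\ref{prob:cycles}). So there is no proof in the paper to compare against, and your proposal does not close the gap either --- as you yourself acknowledge. Your upper-bound half and your accounting ($n$ intersecting pairs plus $n(n-3)/2$ disjoint pairs summing to $\binom{n}{2}$) are correct, and your diagnosis of why Lemma~\ref{lem:hrus} stalls at roughly $n^2/8$ matches the paper's own discussion.

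The concrete gap is the existence step, and it is worth spelling out why the transport step fails too. Lemma~\ref{lem:hrus} and Corollary~\ref{cor:more_general} only control the intersection of a transported vertex's neighbourhood with the \emph{image of the domain} of the partial automorphism being extended; the extending automorphism of $H$ may place the transported vertex arbitrarily relative to the rest of $V(C_n)$. Hence even if a seed vertex $w$ with $N(w)\cap V(C_n)=S_{e_0,f_0}$ existed, the property ``neighbourhood in $V(C_n)$ equals $S_{e,f}$'' would not be preserved under transport, and without exactness the injectivity of your map $\psi$ --- which is the entire point --- is lost. A further (fixable) wrinkle: for disjoint edges $e,f$ of $C_n$ the induced subgraph on $S_{e,f}$ is a path $P_4$ when the edges are at distance one along the cycle and $2K_2$ otherwise, so the map $S_{e_0,f_0}\to S_{e,f}$ is a partial automorphism of $C_n$ only within each of these two types, and you would need a separate seed for each. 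The only instance of the conjecture currently known is $n=7$, via the computation $\eppa(C_7)=21$ announced in~\cite{BradleyEPPA}; your two suggested routes (a strengthened Lemma~\ref{lem:hrus} prescribing the full $V(C_n)$-neighbourhood, or a Neumann--Cameron style rigidity argument) are reasonable directions but are not carried out, so the statement remains unproved.
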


\begin{problem}
Prove lower bounds on EPPA numbers for planar graphs.
\end{problem}
Planar graphs seem to be close to the graphs for which we were able to prove non-trivial lower bounds in that they have constant average degree and moreover they are 4-colourable, hence contain large independent sets, which gives hope that one could adapt the methods from this paper to show a lower bound. On the other hand, note that the star on $n$ vertices is a planar graph with a large independent set and a large maximum degree, but it is a subgraph of a homogeneous graph (the complement of the disjoint union of two copies of $K_{n-1}$) and thus has a linear EPPA number, so statements of the form ``either $G$ is a subgraph of a homogeneous graph, of $\eppa(G)$ is at least something'' are what one needs to aim for. As a particular instance of this problem, can one prove a quadratic lower bound for all planar graphs, not only the bounded-degree ones, hence extending Corollary~\ref{cor:bounded_quadratic}? See also Conjecture~\ref{conj:quadratic}.

Note that the graph from Theorem~\ref{thm:lower_bound} is planar and one can get it from the star (with a linear EPPA number) by adding isolated vertices. This also shows that the EPPA number can grow significantly by adding just a single isolated vertex.

\begin{question}
What is the asymptotically slowest growing function $f$ such that for every graph $G$ and every $v \in G$ which has no neighbours in $G$ we have that $\eppa(G)\leq f(\eppa(G-v))$? 
\end{question}
Theorem~\ref{thm:hkn} implies that $f(n) = (n+1)2^n$ works. On the other hand, if $G$ is the star on $n$ vertices together with an isolated vertex $v$ then, as we have seen above, $\eppa(G-v) = 2(n-1)$, while $\eppa(G)$ is at least quadratic by Corollary~\ref{cor:more_general}. Thus $f(n)$ is at least quadratic. Note that if we do not require that $v$ is isolated then one can remove the high-degree vertex from the construction in Theorem~\ref{thm:lower_bound} and get a jump from a linear EPPA number to $\Omega(2^n/\sqrt{n})$. Bradley-Williams and Cameron~\cite{BradleyEPPA} show that line graphs $L(K_{n,n})$ and slight modifications of Kneser graphs behave nicely with respect to adding isolated vertices when serving as EPPA-witnesses.

\begin{problem}\label{prob:random}
Improve upper and/or lower bounds for $\eppa(G(n, f(n)))$, the random graph model where every edge is present independently with probability $f(n)$ where $f(n)$ is some reasonable function. More formally, prove that $$\lim_{n\to \infty} \mathbb P[\eppa(G(n, f(n))) \leq b(n)] = 1$$ for some function $b(n)$ (or $\geq b(n)$ for a lower bound).
\end{problem}
In this note we give two partial solutions to this problem. For $f(n) = \frac{1}{2}$ we prove an almost-quadratic lower bound (see Theorem~\ref{thm:random}), and for $f(n) = \frac{c}{n}$ we prove that the EPPA numbers are superpolynomial (see Theorem~\ref{thm:random_bddegree}). We conjecture that the almost-quadratic bound for $G(n,1/2)$ is far from optimal. The reason is that we believe that Lemma~\ref{lem:hrus} is suitable for sparse graphs but it is much less useful for dense graphs. Another possible ``local'' approach for $G(n,1/2)$ is showing that any graph which ``regularises'' $G(n,1/2)$ as a subgraph needs to be large, where by regularising we mean that it contains $G(n,1/2)$ as a subgraph and all degrees of vertices of $G(n,1/2)$ are equal, all sizes of the intersections of the neighbourhoods of the endpoints of edges are equal etc. While it only takes roughly $\sqrt{n}$ extra vertices to make all degrees equal, perhaps it is much harder to do so for pairs, triples, \ldots

\begin{problem}
Study EPPA numbers of Paley graphs.
\end{problem}
Paley graphs are quasirandom of density $\frac{1}{2}$ hence they are, in a way, similar to $G(n,1/2)$. On the other hand, they are self-complementary and very symmetric (for example, both $C_5$ and $L(K_{3,3})$ are Paley graphs). As such, they might be a possible answer to the following meta-problem:

\begin{problem}
Find graph classes whose EPPA numbers have interesting behaviour.
\end{problem}

\medskip

Theorem~\ref{thm:degrees} implies that bounded degree graphs have at least quadratic EPPA numbers, but we believe that it holds in general, that one can improve the bound in Theorem~\ref{thm:bwc} to quadratic.
\begin{conjecture}\label{conj:quadratic}
A graph $G$ on $n$ vertices is either a subgraph of a finite homogeneous graph, or every EPPA-witness of $G$ has at least $\Omega(n^2)$ vertices.
\end{conjecture}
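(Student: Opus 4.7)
The plan is to build on Corollary~\ref{cor:bounded_quadratic}, which already settles the case of bounded maximum degree, and to handle the remaining cases via a structural dichotomy based on Lemma~\ref{lem:hrus}. Since complementation preserves $\eppa$, and since for $n$ sufficiently large the subgraphs of finite homogeneous graphs are precisely the disjoint unions of cliques and their complements, we may assume that neither $G$ nor $\bar G$ is a disjoint union of cliques; in particular $G$ contains an induced $P_3$, which we fix on vertices $u, x, v$ with $ux, xv \in E(G)$ and $uv \notin E(G)$.

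The first case to handle is when $G$ has an independent set of linear size. I would try to construct an independent set $A$ of size $\Omega(n)$ containing $\{u, v\}$ but not $x$, arranged so that $x$ has exactly two neighbors in $A$; Corollary~\ref{cor:more_general} would then give $\eppa(G) \geq \binom{|A|}{2} \in \Omega(n^2)$. Concretely one looks for a large independent set inside $G$ minus the vertices of $N(x) \setminus \{u,v\}$ containing $\{u,v\}$, which works as long as $x$ has degree at most a small constant fraction of $n$. If instead $x$ has very large degree, the strategy is to exchange roles: use the induced $P_3$ in $\bar G$, whose middle vertex has small degree in $\bar G$, so the argument above applies after complementation.

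The complementary case is the Ramsey-type regime in which both the maximum independent set and the maximum clique of $G$ have sublinear size. Here I would refine the approach of Theorem~\ref{thm:random}: take an independent set $A$ of size slightly above $2\log_2 n$ and study the distribution of intersection sizes $k$ realized by vertices of $G$ outside $A$. By the sum version of Lemma~\ref{lem:hrus} the bound becomes $\sum_i \binom{|A|}{k_i}$, and by concentration of the central binomial coefficients, having $k_i$'s that cover a range of width $\Omega(\sqrt{\log n})$ around $\log n$ suffices to reach $\Omega(n^2)$.

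The main obstacle is precisely proving that a sufficient diversity of intersection sizes is forced whenever $G$ is not a subgraph of a homogeneous graph. For quasirandom constructions like Paley graphs this follows from standard concentration estimates, but an adversarial $G$ might realize only a few intersection sizes. Resolving this seems to require a new combinatorial ingredient or a quantitative strengthening of the group-theoretic approach in Theorem~\ref{thm:bwc}. A natural direction is to apply Neumann's Separation Lemma to the action of $\Aut(H)$ on pairs, forcing rigidity of $H$ beyond vertex-transitivity, and to combine this with the Lemma~\ref{lem:hrus}-based analysis to close the remaining gap.
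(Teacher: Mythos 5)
This statement is Conjecture~\ref{conj:quadratic}, which the paper poses as an \emph{open problem}; there is no proof in the paper to compare against, and your proposal does not close the gap either. Your own summary is accurate: the ``main obstacle'' you name at the end is exactly the unresolved core, and the paper itself signals why it is hard --- it remarks that Lemma~\ref{lem:hrus} ``is suitable for sparse graphs but it is much less useful for dense graphs,'' and all of the paper's partial results in this direction (Corollary~\ref{cor:bounded_quadratic} for bounded degree, Theorem~\ref{thm:bwc} for the $\frac54 n$ bound, Theorem~\ref{thm:random} for $\Omega(n^{2-\delta})$ on random graphs) stop short of the full conjecture for structural reasons, not for lack of optimization.

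Beyond the acknowledged obstacle, both branches of your dichotomy have concrete failures. In the first branch, to place $\{u,v\}$ inside a linear independent set disjoint from $N(x)\setminus\{u,v\}$ you must delete $N(u)\cup N(v)\cup N(x)$ before greedily extending, so you need \emph{all three} vertices of the induced $P_3$ to have small degree, not just the middle one; an induced $P_3$ with that property need not exist, and the complementation fallback is not symmetric ($\overline{P_3}=K_2\cup K_1$, so an induced $P_3$ in $G$ does not hand you an induced $P_3$ in $\bar G$ with a low-degree middle vertex). In the second branch, the independent set of size ``slightly above $2\log_2 n$'' need not exist: the probabilistic lower bound $R(k,k)>2^{k/2}$ gives $n$-vertex graphs whose largest homogeneous set has size below $2\log_2 n$, so even after complementation you may only have $\lvert A\rvert\approx\log_2 n$, for which a single binomial coefficient $\binom{\lvert A\rvert}{k}$ is at most about $n$, nowhere near $n^2$. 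Even in the favourable case $\lvert A\rvert=2\log_2 n$, the central binomial coefficient is $\Theta(n^2/\sqrt{\log n})=o(n^2)$, which is precisely why Theorem~\ref{thm:random} only achieves $\Omega(n^{2-\delta})$; recovering the lost $\sqrt{\log n}$ via the sum version of Lemma~\ref{lem:hrus} requires forcing $\Omega(\sqrt{\log n})$ distinct intersection sizes near $\lvert A\rvert/2$, and no mechanism in the paper or in your proposal forces this for an adversarial $G$. A correct proof of the conjecture will need an idea genuinely beyond Lemma~\ref{lem:hrus}.
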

In order to disprove this conjecture one would need to find a new construction of EPPA-witnesses for some graphs and such a result would be very interesting.

Even smaller improvements of Theorem~\ref{thm:bwc} are interesting. For example, Theorem~\ref{thm:bwc} implies that graphs with maximum degree $o(\sqrt{n})$ have super-linear EPPA-witnesses. Can one prove this also for graphs with maximum degree equal to $\sqrt{n}$?

\medskip

So far, the only known polynomial upper bounds on EPPA numbers hold for graphs with bounded degrees and their complements, and for subgraphs of homogeneous graphs. This motivates the following question:

\begin{question}
Is there a class $\mathcal C$ of finite graphs satisfying the following:
\begin{enumerate}
\item For every $d$ there is $G\in \mathcal C$ with a vertex of degree at least $d$ and also a vertex of degree at most $\lvert G\rvert-d$,
\item no finite homogeneous graph is an EPPA-witness for any member of $\mathcal C$, and
\item there is a polynomial $f(n)$ such that for every $G\in \mathcal C$ it holds that $\eppa(G) \leq f(\lvert G\rvert)$.
\end{enumerate}
\end{question}

\subsection{Coherent EPPA numbers}
There is a strengthening of EPPA called \emph{coherent EPPA} which was introduced by Siniora and Solecki~\cite{Siniora} in order to obtain stronger dynamical properties of automorphism groups (see Remark~\ref{rem:homogeneous}). A structure $\str B$ is a \emph{coherent EPPA-witness} for a structure $\str A$ if it is an EPPA-witness for $\str A$ and there is a map $\Psi$ from partial automorphisms of $\str A$ to automorphisms of $\str B$ such that $\Psi(f)$ extends $f$, and if $\str C, \str D, \str E\subseteq \str A$ are substructures of $\str A$, and $f\colon \str C\to \str D$ and $g\colon \str D\to \str E$ partial automorphisms of $\str A$ then $\Psi(gf) = \Psi(g)\Psi(f)$.

\begin{lemma}
$\str B$ is a coherent EPPA-witness for $\str A$ if and only if it is an EPPA-witness for $\str A$ and there is a map $\Psi$ from partial automorphisms of $\str A$ to automorphisms of $\str B$ such that for every $\str C\subseteq \str A$ and automorphisms $f,g\colon \str C\to \str C$ we have that $\Psi(gf) = \Psi(g)\Psi(f)$.
\end{lemma}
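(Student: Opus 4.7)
My plan: forward direction by specialization, reverse direction by a skeleton construction on the groupoid of partial automorphisms. The forward direction is immediate, since the $\Psi$ witnessing coherent EPPA satisfies $\Psi(gf)=\Psi(g)\Psi(f)$ for all composable pairs, including the special case $f,g\in\Aut(\str C)$ demanded by the lemma.

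For the reverse direction, I assume $\str B$ is an EPPA-witness for $\str A$ and $\Psi$ is a map with $\Psi(f)$ extending $f$ and $\Psi(gf)=\Psi(g)\Psi(f)$ for all $f,g\in\Aut(\str C)$, and I will build a new map $\Psi'$ with the full coherence property. First I would pick a representative $\str C_0$ from each isomorphism class of substructures of $\str A$. For each substructure $\str C$ in the class of $\str C_0$, fix an isomorphism $\phi_{\str C}\colon \str C_0\to \str C$ with $\phi_{\str C_0}=\mathrm{id}$, and, using the EPPA-witness property, extend $\phi_{\str C}$ to an automorphism $\tilde\phi_{\str C}$ of $\str B$; take $\tilde\phi_{\str C_0}=\mathrm{id}$. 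For a partial automorphism $f\colon \str C\to \str D$, define
\[
\Psi'(f) := \tilde\phi_{\str D} \cdot \Psi(\phi_{\str D}^{-1} f \phi_{\str C}) \cdot \tilde\phi_{\str C}^{-1};
\]
note that $\phi_{\str D}^{-1}f\phi_{\str C}\in\Aut(\str C_0)$ (since $\str C\cong\str D$ forces the same representative), so the middle factor falls inside the scope of the weak coherence hypothesis.

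The extension property for $\Psi'$ is a direct three-step chase: for $c\in\str C$, $\tilde\phi_{\str C}^{-1}$ sends $c$ to $\phi_{\str C}^{-1}(c)\in\str C_0$, the middle $\Psi$-value (which extends its argument) sends this to $\phi_{\str D}^{-1}(f(c))$, and finally $\tilde\phi_{\str D}$ sends it to $f(c)$. Full coherence for composable $f\colon \str C\to \str D$ and $g\colon \str D\to \str E$ follows by telescoping: in $\Psi'(g)\Psi'(f)$ the inner $\tilde\phi_{\str D}^{-1}\tilde\phi_{\str D}$ cancels, and the two remaining adjacent $\Psi$-values on elements of $\Aut(\str C_0)$ combine via weak coherence into $\Psi(\phi_{\str E}^{-1}gf\phi_{\str C})$, producing exactly the middle factor of $\Psi'(gf)$. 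The main conceptual content is that weak coherence provides a genuine group homomorphism $\Aut(\str C_0)\to \Aut(\str B)$ (namely $\Psi$ restricted to $\Aut(\str C_0)$) for each representative, which together with the chosen transports $\tilde\phi_{\str C}$ assembles into a functor on the whole groupoid of partial automorphisms; I do not anticipate any real obstacle, as once the skeleton is fixed the verification is bookkeeping.
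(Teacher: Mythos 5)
Your proposal is correct and follows essentially the same route as the paper: choose a representative of each isomorphism class, fix transport isomorphisms from the representative to each substructure together with chosen extensions to $\Aut(\str B)$, conjugate the weakly coherent $\Psi$ on $\Aut(\str C_0)$ by these transports, and verify coherence by the telescoping cancellation. Your $\phi_{\str C}$, $\tilde\phi_{\str C}$, and $\phi_{\str D}^{-1}f\phi_{\str C}$ are exactly the paper's $\iota_{\str C}$, $\Psi(\iota_{\str C})$, and $\alpha$.
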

In other words, in the definition we can restrict to looking at automorphisms of substructures of $\str A$.
\begin{proof}
One direction is clear. For the other direction, pick a representative of every isomorphism type of a subgraph of $\str A$. Given $\str C\subseteq \str A$, denote by $r(\str C)\subseteq \str A$ this representative. Moreover, for every $\str C\subseteq \str A$, pick one isomorphism $\iota_\str C\colon r(\str C)\to \str C$ such that if $\str C=r(\str C)$ then $\iota_\str C$ is the identity, and an arbitrary extension $\Psi(\iota_\str C)$ to an automorphism of $\str B$ (where all identities are extended to the identity automorphism). Put $\Psi(\iota_\str C^{-1}) = \Psi(\iota_\str C)^{-1}$. 

Every partial automorphism $f\colon \str C\to \str D$ can be written as $f = \iota_\str D\alpha \iota_\str C^{-1}$ where $\alpha$ is some automorphism of $r(\str C)=r(\str D)$ which is determined uniquely by $f$. Define $\Psi(f) = \Psi(\iota_\str D)\Psi(\alpha)\Psi(\iota_\str C)^{-1}$.

Note that if $g\colon \str D\to \str E$ is a partial automorphism with $g = i_\str D\beta \iota_\str E^{-1}$ then $gf = i_\str E\beta\alpha \iota_\str C^{-1}$, and consequently
\begin{align*}
	\Psi(gf) &= \Psi(i_\str E)\Psi(\beta\alpha)\Psi(\iota_\str C)^{-1}\\
			&= \Psi(i_\str E)\Psi(\beta)\Psi(\alpha)\Psi(\iota_\str C)^{-1}\\
			&= \Psi(i_\str E)\Psi(\beta)\Psi(\iota_\str D)^{-1}\Psi(\iota_\str D)\Psi(\alpha)\Psi(\iota_\str C)^{-1}\\
			&= \Psi(g)\Psi(f).
\end{align*}
\end{proof}

Almost all classes for which EPPA is known are also known to have coherent EPPA. A currently remaining exception to this are the class of all finite two-graphs for which this issue has been highlighted since 2018~\cite{eppatwographs}. There are two recent additions to the ``naughty list'': $n$-partite tournaments and semigeneric tournaments~\cite{HubickaSemigeneric}. Both Kneser graphs and valuation graphs from Section~\ref{sec:upper} are in fact coherent EPPA-witnesses if one makes sure to do things in a canonical way when extending partial automorphisms. Let $\ceppa(G)$ be the number of vertices of the smallest coherent EPPA-witness for $G$. It would be interesting to see if one can show some separation between $\eppa(G)$ and $\ceppa(G)$:
\begin{question}\label{q:ceppa}
Is there a graph $G$ for which $\eppa(G) \neq \ceppa(G)$?
\end{question}

If the answer to the above question is positive then it is natural to ask whether it is possible for coherent EPPA numbers to grow asymptotically faster than EPPA numbers.
\begin{question}
Is there a class of finite graphs $\mathcal C$ such that $\max\{\ceppa(G) - \eppa(G) : G\in \mathcal C, \lvert G\rvert = n\} \in \omega(1)$.
\end{question}

Note that we always have $\eppa(G)\leq \ceppa(G)$, and the upper bounds from Section~\ref{sec:upper} hold also for coherent EPPA numbers.

Motivated by a recent paper of Sági~\cite{Sagi2024} (in particular, Remark 4.3), one can introduce the following interpolation of EPPA and coherent EPPA: Let $\str A$ be a structure and let $\str B$ be an EPPA-witness for $\str A$. We say that $\str B$ is \emph{Aut-coherent} if there is a group embedding $\psi\colon \Aut(\str A) \to \Aut(\str B)$ such that $\psi(g)$ extends $g$ for every $g\in \Aut(\str A)$. Or in other words, we only want that full automorphisms of $\str A$ get extended coherently.

\begin{question}
Is there a graph $G$ such that its smallest Aut-coherent EPPA-witness has more vertices that $\eppa(G)$?
\end{question}
A positive answer to this question would imply a positive answer to Question~\ref{q:ceppa}, and it might be easier to work with.

\subsection{$K_k$-free EPPA numbers}\label{sec:kkfree}
In 1995, Herwig proved that if $G$ is a triangle-free graph then there is a triangle-free graph $H$ which is an EPPA-witness for $G$~\cite{Herwig1995}, and subsequently he proved this for $K_k$-free graphs for all $k\geq 3$~\cite{herwig1998}. This is in sharp contrast with the constructions from Section~\ref{sec:upper} which contain many triangles. It would be interesting to understand how much harder it is to produce $K_k$-free EPPA witnesses:
\begin{problem}
Prove upper- and lower- bounds on the size of $K_k$-free EPPA-witnesses for $K_k$-free graphs.
\end{problem}

Herwig's proofs do not immediately yield a bound on the size of the constructed EPPA-witnesses. He, however, provides a $2^{2^{2^{\mathcal O(n)}}}$ bound for an auxiliary lemma. Besides Herwig's constructions for $K_k$-free EPPA-witnesses, a different construction has been discovered by Hodkinson and Otto~\cite{hodkinson2003}. Their idea has been refined by Evans, Hubička, and Nešetřil~\cite{Evans3} and then further refined by Hubička, Konečný and Nešetřil (see Section~7 of~\cite{Hubicka2018EPPA}). The version presented in Section~7 of~\cite{Hubicka2018EPPA} gives, combined with Theorem~\ref{thm:hkn}, a rough upper bound of $2^{2^{2^{\mathcal O(n)}}}$, the proofs in~\cite{hodkinson2003} and~\cite{Evans3} are in principle the same as the proof from~\cite{Hubicka2018EPPA} and hence lead to the same upper bounds. 

Nevertheless, it is possible to improve the bounds that the Hodkinson--Otto-type constructions give for $K_k$-free graphs (or, in general, for relational free amalgamation classes given by finitely many forbidden irreducible substructures), and get an $2^{2^{\mathcal O(kn)}}$ upper bound. The construction is as follows:

Let $G$ be a finite $K_k$-free graph and let $H_0$ be an EPPA-witness for $G$ (not necessarily $K_k$-free). Given a vertex $u\in H_0$, let $U(u)$ be the set of all subgraphs of $H_0$ containing $u$ which are isomorphic to $K_k$. We say that a function $\chi\colon U(u) \to \{1,\ldots, k-1\}$ is a \emph{valuation function for $v$}. A pair $(u,\chi)$, $(v,\xi)$, where $\chi$ is a valuation function for $u$ and $\xi$ is a valuation function for $v$, is \emph{generic} if $u\neq v$ and for every $A \in U(u)\cap U(v)$ it holds that $\chi(A)\neq \xi(A)$.

We define a graph $H$ whose vertices are all pairs $(u,\chi)$ where $u\in H_0$ and $\chi$ is a valuation function for $u$, such that $(u,\chi)$ and $(v,\xi)$ form and edge of $H$ if and only if the pair is generic and $u$ and $v$ form an edge of $H_0$. Clearly, $H$ is $K_k$-free: If there was a copy $\widetilde{A}$ of $K_k$ in $H$ then, by looking at the first coordinate, we get a corresponding copy $A$ of $K_k$ in $H_0$. Thus $A$ is in the domain of all valuation functions of vertices of $\widetilde{A}$, and they all give it pairwise different valuations, which is a contradiction as there are only $k-1$ possible different valuations. To get a copy of $G$ in $H$, one can fix an injective function $A\cap G\to \{1,\ldots,k-1\}$ for every copy $A$ of $K_k$ in $H_0$, and define valuation functions for vertices of $G$ according to these functions. The proof that $H$ is an EPPA-witness for $G$ is identical to the proofs in~\cite{hodkinson2003} or in Section~7 of~\cite{Hubicka2018EPPA} (it is, in fact, a coherent EPPA witness).

Note that if $H_0$ has $m$ vertices then $H$ has at most $m(k-1)^{m - 1\choose {k-1}}$ vertices. If we use Theorem~\ref{thm:hkn} to produce $H_0$, we have $\lvert H_0\rvert = n2^{n-1}$, and thus $\lvert H\rvert \in 2^{2^{\mathcal O(kn)}}$.

Even though the best upper bound is doubly exponential, the best lower bound known to the authors is $\Omega(2^n/\sqrt{n})$ from Theorem~\ref{thm:lower_bound}. Sabok asked (in private communication, see Question~2.0.9 in~\cite{Konecny2023phd}) if there is a class with smallest EPPA-witnesses larger than $2^{n^d}$ for every $d$. $K_k$-free graphs are a candidate for a positive answer to this question.
Another candidate are semigeneric tournaments for which a $2^{2^{\mathcal O(n)}}$ upper bound has been proved recently by Hubička, Jahel, Konečný, and Sabok~\cite{HubickaSemigeneric}.

\subsection{EPPA numbers of directed graphs}
\begin{problem}\label{prob:directed}
Study EPPA numbers of directed graphs.
\end{problem}
To our best knowledge there are no explicitly stated results for directed graphs. In Section~4.2 of~\cite{herwig2000}, Herwig and Lascar generalise their Kneser graph construction to arbitrary relational structures as follows. For simplicity, assume that we have only one $r$-ary relation (otherwise one can take a product). Consider a finite set $X$ and define a structure whose vertices are all $r$-tuples of $d$-element subsets of $X$. Tuples $(a_1^1, a_2^1,\ldots,a_r^1),\ldots,(a_1^r, a_2^r,\ldots,a_r^r)$ are in the relation if and only if $\bigcap_{i=1}^r a_i^i \neq \emptyset$. For directed graphs with maximum degree $d$ this gives an $\mathcal O(n^{2d})$ upper bound.

In Section~4 of~\cite{Hubicka2018EPPA}, Hubička, Konečný, and Nešetřil also give a valuation function construction for arbitrary relational structures. With one injective relation of arity $r$ this amounts to having valuation functions from all injective $r$-tuples to $\{0,1\}$, and putting a tuple into a relation if and only if no two elements have the same projection and the sum of the valuation functions evaluated at the projections is odd, thereby giving an $\mathcal O(2^{n^r})$ upper bound, which specializes to $\mathcal O(2^{n^2})$ for directed graphs.

However, one can make a more efficient valuation function construction by letting valuation functions for vertex $i$ be functions from $[n]\setminus\{i\}$ to $\mathbb Z_4$, and deciding one of the four possibilities for the pair $(i,f)$ and $(j,g)$ (that is, no edges, both edges or one edge in one of the two directions) according to the value of $f(j) + g(i)$. If we disallow bi-directional edges then in fact $\mathbb Z_3$ is enough as the domain. This leads to upper bounds of $n4^{n-1}$ and $n3^{n-1}$ respectively. We remark that a similar construction has been used by Beliayeu~\cite{Beliayeu2023bc} to prove EPPA for complementing directed graphs with loops.

Many of the lower bounds presented here generalise immediately to directed graphs. A slight modification to the construction from Theorem~\ref{thm:lower_bound} (have one direction of an edge to $\frac{n}{3}$ vertices, another to another $\frac{n}{3}$ vertices and no edge to the remaining $\frac{n}{3}$ vertices) gives a lower bound of $\Omega(\frac{3^n}{n})$ for directed graphs with no bi-directional edges, and an analogous modification gives $\Omega(\frac{4^n}{n^{3/2}})$ for directed graphs with bi-directional edges. It would, however, be interesting to see, for example, what the EPPA numbers are for oriented cycles, and whether they differ from cycles where each edge is oriented randomly. Or what the EPPA numbers are for linear orders (seen as transitive tournaments).

\subsection{EPPA numbers of hypergraphs}
\begin{problem}
Study EPPA numbers of hypergraphs.
\end{problem}
As we mentioned above, both the Kneser graph and the valuation function constructions generalise to hypergraphs. For the Kneser graph, regularisation is technically more challenging and leads to an upper bound of $2^{r!rn^r}$ for $r$-uniform hypergraphs (see Section~4.2 of~\cite{herwig2000}), but should, again, give better bounds for bounded degree hypergraphs for a suitable notion of a degree. With valuation functions, one can have the domain of valuation functions for vertex $i$ be the set of all $(r-1)$-elements subsets of $[n]\setminus\{i\}$, leading to an upper bound of $n 2^{n-1 \choose {r-1}}$ (see Remark~3.6 of~\cite{Hubicka2018EPPA}).

Contrary to directed graphs, the lower bound construction from Theorem~\ref{thm:lower_bound} does not adapt to give a matching lower bound for hypergraphs. We can, however, get at least a superexponential lower bound:
\begin{observation}\label{obs:3uniform}
For every $m = 2^k$, there is a 3-uniform hypergraph $G$ on $n=m+k+1$ vertices such that every EPPA-witness for $G$ has at least $m! \in 2^{\Omega(n\log(n))}$ vertices.
\end{observation}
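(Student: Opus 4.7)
The plan is to construct $G$ explicitly on the vertex set $A \cup B \cup \{v\}$, where $A = \{a_0, \ldots, a_{m-1}\}$, $B = \{b_0, \ldots, b_{k-1}\}$ and $m = 2^k$, so $\lvert V(G)\rvert = m+k+1$. Writing the binary expansion $i = \sum_{j=0}^{k-1} \epsilon_j(i)\, 2^j$ with $\epsilon_j(i)\in\{0,1\}$, I include the hyperedge $\{v, a_i, b_j\}$ precisely when $\epsilon_j(i) = 1$, and nothing else. Thus each $a_i$ receives a unique ``binary signature'' in $B$ via $v$, and all $2^k$ subsets of $B$ are realised as signatures.

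Next, I would apply a Hrushovski-style argument in the spirit of Lemma~\ref{lem:hrus}. Since every hyperedge of $G$ contains $v$, the induced sub-hypergraph on $A \cup B$ is edgeless, so every bijection $\pi\colon A \cup B \to A \cup B$ fixing $B$ pointwise is a partial automorphism of $G$. There are $m!$ such $\pi$'s, indexed by $\Sym(A)$. Let $H$ be an EPPA-witness for $G$, and for each $\pi$ fix an extension $\hat\pi \in \Aut(H)$. The main claim is that $\pi \mapsto \hat\pi(v)$ is injective, which immediately gives $\lvert V(H)\rvert \geq m!$.

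For injectivity I would exploit that $H$ contains $G$ as an induced sub-hypergraph: for any $\{x,y\}\subseteq A \cup B$, the triple $\{v,x,y\}$ lies inside $V(G)$ and so is a hyperedge of $H$ iff it is one of $G$. Hence the link of $v$ in $H$ restricted to pairs from $A \cup B$ equals the bipartite graph $L := \{\{a_i, b_j\} : \epsilon_j(i) = 1\}$. Because $\hat\pi$ fixes $B$ and sends $A$ to $A$, the same reasoning applied to $\hat\pi(v)$ shows that the link of $\hat\pi(v)$ restricted to $A \cup B$ is the $\pi$-image of $L$ (acting only on the $A$-side). So if $\hat\pi_1(v) = \hat\pi_2(v)$, then $\pi_2^{-1}\pi_1$ preserves $L$ while fixing $B$ pointwise; since the $m$ vertices of $A$ have pairwise distinct neighborhoods in $B$, this forces $\pi_1 = \pi_2$.

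The main subtlety is precisely this link-identification step: one must argue that the local structure of $\hat\pi(v)$ on $A \cup B$ is determined by $\pi$ and nothing else. Restricting to partial automorphisms that fix $B$ pointwise is what makes this possible, because it keeps $\hat\pi(B)$ inside $V(G)$ and so inside the range of control provided by the induced-sub-hypergraph property. The final asymptotic bound $m! \in 2^{\Omega(n\log n)}$ then follows from Stirling and $n = m+k+1 \leq 2m$.
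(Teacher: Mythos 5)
Your proposal is correct and is essentially the paper's own proof: the same hypergraph (a single apex vertex joined through $k$ ``bit'' vertices to an $m$-element set according to binary signatures), the same family of $m!$ partial automorphisms permuting the $m$-set while fixing the bit vertices, and the same signature/link argument showing the images of the apex vertex are pairwise distinct. The only cosmetic difference is that the paper packages the final step as surjectivity of $v\mapsto f_v$ onto the set of bijections rather than injectivity of $\pi\mapsto\hat\pi(v)$.
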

\begin{proof}
The vertices of $G$ will be partitioned into three parts $A,B,C$ with $A = \{a\}$, $B = \{b_0,\ldots,b_{k-1}\}$ and $C = \{0,\ldots,m-1\}$. We put $\{x,y,z\}$ to be a hyperedge of $G$ if and only if $x=a$, $y=b_i$ for some $0\leq i< k$ and $z\in C$ is such that it has 1 on the $i$-th position when written in binary. There are no other hyperedges in $G$.

Let $H$ be an EPPA-witness for $G$. Given $v\in H$, we define a function $f_v\colon C\to C$ such that the $i$-th binary bit of $f_v(c)$ is 1 if and only if $\{v,b_i,c\}$ is a hyperedge of $H$. We claim that for every bijection $f\colon C\to C$ there is some $v\in H$ with $f_v = f$. First note that $f_a$ is the identity on $C$. Now, let $f\colon C\to C$ be a bijection. Extend it to a function $f'\colon B\cup C \to B\cup C$ by putting $f'(b) = b$ for every $b\in B$. Clearly, $f'$ is a partial automorphism of $G$ and hence extends to an automorphism $g$ of $H$. We have $f_{g(a)} = f$. As there are $m!$ such bijections $C\to C$ we need to have $\lvert H\rvert \geq m!$.
\end{proof}
Notice that if we fix a member of $B$, its hyperedges induce a bipartite graph on $A\cup C$. This was the key observation for our proof: Fixing all members of $B$ gave us $k$ different graphs which we were able to use to identify each vertex of $C$ uniquely.

This is essentially as far as one can get using this simple idea: There are only $2^{\mathcal O(n\log n)}$ partial automorphisms of any structure on $n$ vertices. Therefore, we pose the following question:
\begin{question}
Are there 3-uniform hypergraphs on $n$ vertices for arbitrarily large $n$ whose EPPA-witnesses need to have at least $c^{n^2}$ vertices for some constant $c>1$?
\end{question}
Proving a $2^{\omega(n\log(n))}$ lower bound would already be a significant progress (or even improving the base of the exponent in Observation~\ref{obs:3uniform}), even for $k$-uniform hypergraphs for $k$ not necessarily equal to 3.

\section*{Acknowledgments}
The authors would like to thank the anonymous referee, Martin Balko, David Hartman, Dugald Macpherson, András Pongrácz, and Robert Šámal for helpful comments.

\bibliography{ramsey.bib}
\end{document}